\newcommand{\scrA}{\mathscr{A}}
\newcommand{\scrR}{\mathscr{R}}
\newcommand{\ol}[1]{\overline{#1}}
\newtheorem{theorem}{Theorem}
\newtheorem{corollary}{Corollary}
\newtheorem{proposition}{Proposition}
\newtheorem{lemma}{Lemma}
\begin{document}
\begin{center}

   \title[Small Overlap Monoids]{Small Overlap Monoids: \\ The Word Problem}
   \maketitle

    Mark Kambites \\

    \medskip

    School of Mathematics, \ University of Manchester, \\
    Manchester M13 9PL, \ England.

    \medskip

    \texttt{Mark.Kambites@manchester.ac.uk} \\

    \medskip

\end{center}

\begin{abstract}
We develop a combinatorial approach to the study of semigroups and monoids 
with finite presentations satisfying small overlap conditions. In contrast 
to existing geometric methods, our approach facilitates a sequential
left-right analysis of words which lends itself to the development of
practical, efficient computational algorithms. In particular, we obtain a highly
practical linear time solution to the word problem for monoids and semigroups
with finite presentations satisfying the condition $C(4)$, and a polynomial
time solution to the uniform word problem for presentations satisfying the
same condition.
\end{abstract}

Small overlap conditions are simple and natural combinatorial conditions
on semigroup and monoid presentations, which serve to limit the complexity of derivation
sequences between equivalent words in the generators.
They form a natural semigroup-theoretic analogue of the \textit{small
cancellation} conditions which are extensively used in combinatorial
and computational group theory \cite{Lyndon77}. It is well known that every group admitting
a finite presentation satisfying suitable small cancellation conditions
is \textit{word hyperbolic} in the sense of Gromov \cite{Gromov87},
and in particular has word problem solvable in linear time.

In the 1970s, Remmers \cite{Remmers71, Remmers80} developed an
elegant geometric theory of small overlap semigroups, using the
natural semigroup-theoretic analogue of the \textit{van Kampen diagrams}
extensively employed in combinatorial group theory (see for example
\cite{Lyndon77}). He applied his methods to show that semigroups satisfying
sufficiently small overlap conditions have what would now be called
\textit{linear Dehn function}, that is, that the minimum length of a
derivation sequence between any two equivalent words is bounded above by
a linear function of the word lengths. In theory, it follows immediately
that one can
test if two words in the generators for such a semigroup are equivalent,
by exhaustively searching the
(finite) space of all applicable derivation sequences of the given length, to
see if any of them transforms one word to the other. However,
the number of possible derivation sequences, and hence the time complexity
of this algorithm, is exponential in the word length. More sophisticated
techniques (such as applications of graph reachability algorithms)
are of course applicable, but the problem remains one of searching a space of
exponential size, and so we cannot really hope that this approach will lead
to a tractable solution for the word problem.
The question naturally arises, then, of how hard the word problem really is
in these semigroups.

In this paper, we develop a new approach to the study of this important
class of semigroups and monoids, along purely combinatorial lines. While our work
lacks some of the mathematical elegance of Remmers' approach --- indeed
our foundational results are of a rather technical nature and our proofs
mainly by case analysis --- it has the
advantage of permitting a \textit{sequential} (left-right) analysis of
elements, which for computational purposes seems more relevant than a
geometric viewpoint. Two computational consequences of the theory we
develop are of particular interest. The first is a linear time (on a
two-tape Turing machine) algorithm to solve the word problem in any semigroup
with a presentation satisfying Remmers' condition $C(4)$. The second is a
polynomial time (more precisely, in the RAM model, quadratic in the presentation length and
linear in the word length) solution to the uniform
word problem for presentations satisfying the same condition.
While the proofs of correctness and of the time complexity bounds
for these algorithms are rather technical, the algorithms themselves are
quite straightforward to describe and eminently suitable for practical
implementation; the author is currently working on an implementation for
the GAP computer algebra system \cite{GAP}.

In addition to this introduction, this paper comprises five sections.
In Section~\ref{sec_prelim} we briefly recall the definitions of small
overlap semigroups and monoids, together with some of their properties,
and introduce some notation and terminology which will be used in the
rest of the paper. Section~\ref{sec_weakcan} establishes some technical,
but nonetheless important, combinatorial properties of small overlap
monoids, which are then used in Section~\ref{sec_equality} to give a
sequential characterisation of equivalence for two words in the generators
of a $C(4)$ presentation. Section~\ref{sec_algorithm} shows how this
characterisation can be used to develop a linear time algorithm for the
solution of the word problem of a fixed small overlap presentation. Finally,
in Section~\ref{sec_uniform} we apply our techniques to the solution of the
uniform word problem for $C(4)$ presentations; we also observe that
one test efficiently whether an arbitrary presentation satisfies the condition
$C(4)$.

The relationship of this work to the geometric approach developed
by Remmers \cite{Remmers71} perhaps deserves a further comment. As already
mentioned, our approach to small overlap semigroups is entirely combinatorial and, in its finished state, makes
no direct use of Remmers' geometric machinery. However, the author would most
likely never have arrived at this viewpoint without the insight and intuition
afforded by Remmers' approach, and the reader interested in fully understanding
the present paper may find it helpful to study also Remmers' work in parallel.
Some of his results have been given a very accessible treatment by Higgins
\cite{Higgins92}, but unfortunately the only complete source still seems to
be his thesis \cite{Remmers71}.

\section{Preliminaries}\label{sec_prelim}

We assume familiarity with basic notions of combinatorial semigroup
theory, including free semigroups and monoids, and semigroup and monoid
presentations.
In all but Section~\ref{sec_uniform} of the paper, which is devoted to uniform
decision problems, we assume we have a fixed finite presentatation for a
monoid
(or semigroup --- we
shall see shortly that the difference is unimportant). Words are
assumed to be drawn from the free monoid on the generating alphabet
unless otherwise stated. We write $u = v$ to indicate that two words are
equal in the free monoid, and $u \equiv v$ to indicate that they represent
the same element of the semigroup presented. We say that a word $p$ is a
\textit{possible prefix} of $u$ if there exists a (possibly empty) word
$w$ with $pw \equiv u$, that is, if the element represented by $u$ lies in
the right ideal generated by the element represented by $p$. The empty
word is denoted $\epsilon$.

A \textit{relation word} is a word which occurs as one side of a
relation in the presentation. A \textit{piece} is a word in the
generators which occurs as a factor in sides of two different relations,
or as a factor of both sides of a relation, or in two different (possibly
overlapping) places within one side of a relation. To ensure a uniform
treatment for free semigroups and monoids, we make the convention that the
empty word $\epsilon$ is always a piece, even if the presentation has no
relations.

The presentation is said to \textit{satisfy the condition $C(n)$}, where
$n$ is a positive integer, if no relation word can be written as the product
of \textbf{strictly fewer than} $n$ pieces. Thus for each $n$, $C(n+1)$ is
a strictly stronger condition than $C(n)$.
We briefly mention another related condition. The presentation \textit{satisfies
the condition OL($x$)}, where $0 \leq x \leq 1$ if whenever a piece $p$
occurs as a factor of a relation word $R$ we have $|p| < x |R|$.
Notice that if $n$ is a positive integer, then a semigroup
satisfying OL($1/n$) will certainly satisfy $C(n+1)$.

The weakest meaningful small overlap condition, $C(1)$, says
that no relation word is a product of zero pieces, that is, that $\epsilon$
is not a relation word. From this we see that in a small overlap monoid
presentation, no non-empty word can be equivalent to the empty word, that
is, no non-empty word can represent the identity. It follows that
every small overlap monoid presentation is also interpretable as a semigroup
presentation, and that the monoid presented is isomorphic to the semigroup
presented with an adjoined identity element. For simplicity in what follows
we shall focus upon small overlap monoids, but from each of our results one
can immediately deduce a corresponding result for small overlap semigroups.

For each relation word $R$, let $X_R$ and $Z_R$ denote respectively the
longest prefix of $R$ which is a piece, and the longest suffix of $R$
which is a piece. If the presentation satisfies $C(3)$ then $R$ cannot be
written as a product of two pieces, so this prefix and suffix cannot meet;
thus, $R$ admits a factorisation $X_R Y_R Z_R$ for some non-empty word $Y_R$.
If moreover the presentation satisfies the stronger condition $C(4)$ then $R$
cannot be written as a product of three pieces, so $Y_R$ is not a piece. The
converse also holds: a $C(3)$ presentation such that no $Y_R$ is a piece 
is a $C(4)$ presentation. We
call $X_R$, $Y_R$ and $Z_R$ the \textit{maximal piece prefix}, the
\textit{middle word} and the \textit{maximal piece suffix} respectively
of $R$.

Assuming now that the presentation satisfies at least the condition $C(3)$,
we shall use the letters $X$, $Y$ and $Z$ (sometimes with adornments or
subscripts) exclusively to represent maximal piece prefixes, middle words
and maximal piece suffixes respectively of relation words; two such letters
with the same subscript or adornment (or with none) will be assumed to
stand for the appropriate factors of the same relation word.

If $R$ is a relation word we write $\ol{R}$ for the (necessarily unique,
as a result of the small overlap condition)
word such that $(R, \ol{R})$ or $(\ol{R}, R)$ is a relation in the presentation. We write
$\ol{X_R}$, $\ol{Y_R}$ and $\ol{Z_R}$ for $X_{\ol{R}}$, $Y_{\ol{R}}$ and
$Z_{\ol{R}}$ respectively. (This is an abuse of notation since, for example,
the word $X_R$ may be a maximal piece prefix of two distinct relation words, but
we shall be careful to ensure that the meaning is clear from the context.)

\section{Weak Cancellation Properties}\label{sec_weakcan}

To perform efficient computations with words, it is very helpful to be
able to process them in a sequential, left-right manner. To facilitate
this in the case of the word problem for small overlap monoids, we
need to know what can be deduced about the equivalence (or non-equivalence)
of two words from prefixes of those words. This section
develops a theory with this end in mind, including a number of results which
can be viewed as weak cancellativity conditions satisfied by small overlap
monoids. We assume throughout a fixed monoid presentation satisfying the
small overlap condition $C(4)$.

We first introduce some terminology. A \textit{relation prefix} of a word
is a prefix which admits a (necessarily unique, as a consequence of the
small overlap condition) factorisation of the form $a X Y$ where $X$ and $Y$
are the maximal piece prefix and middle word respectively of some relation
word $XYZ$. An \textit{overlap prefix (of length $n$)} of
a word $u$ is a relation prefix which admits an (again necessarily unique)
factorisation of the form $b X_1 Y_1' X_2 Y_2' \dots X_n Y_n$ where
\begin{itemize}
\item $n \geq 1$;
\item no factor of the form $X_0Y_0$ begins before the end of the prefix $a$;
\item for each $1 \leq i \leq n$, $R_i = X_i Y_i Z_i$ is a relation word with $X_i$ and
$Z_i$ the maximal piece prefix and suffix respectively; and
\item for each $1 \leq i < n$, $Y_i'$ is a proper, non-empty prefix of $Y_i$.
\end{itemize}
Notice that if a word has a relation prefix, then the shortest such must
be an overlap prefix. A relation prefix $a X Y$ of a word $u$ is called
 \textit{clean} if $u$ does \textbf{not} have a prefix
$$a XY' X_1 Y_1$$
where $X_1$ and $Y_1$ are the maximal piece prefix and middle word respectively
of some relation word, and $Y'$ is a proper, non-empty prefix of $Y$. Clean
overlap prefixes, in particular, will play a crucial role in what follows.

\begin{proposition}\label{prop_overlapprefixnorel}
Let $a X_1 Y_1' X_2 Y_2' \dots X_n Y_n$
be an overlap prefix of some word. Then this prefix
contains no relation word as a factor (except possibly $X_n Y_n$ in
the case that $Z_n = \epsilon$).
\end{proposition}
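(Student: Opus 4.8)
The plan is to argue by contradiction: suppose some relation word $S = X_S Y_S Z_S$ (with $X_S, Y_S, Z_S$ its maximal piece prefix, middle word and maximal piece suffix) occurs as a factor of the overlap prefix $P = a X_1 Y_1' \dots X_n Y_n$, and to force the conclusion that $S = X_n Y_n$ with $Z_n = \epsilon$. The first move is to localise where the occurrence of $S$ can begin. The prefix $X_S Y_S$ of $S$ is precisely a factor ``of the form $X_0 Y_0$'' in the sense of the overlap-prefix definition (take $R_0 = S$), so by the requirement that no such factor begins before the end of $a$, the occurrence of $S$ cannot start inside $a$; hence it begins at a position $\ge |a|$, that is, at or after the start of $R_1$.

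The engine of the whole argument is the following observation, which I would isolate first. If a relation word $S$ occurs as a factor of a relation word $R$ with $S \neq R$, then $S$ is a piece: it occurs in the two distinct relation words $S$ and $R$, covering both the ``two different relations'' and the ``both sides of one relation'' clauses of the definition of piece. A relation word equal to a single piece contradicts $C(4)$. Thus \emph{no relation word is a proper factor of a relation word}, and the only way a relation word can sit inside another is as the whole of it.

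I would then induct on $n$. For $n = 1$ we have $P = a X_1 Y_1$; since $S$ starts at a position $\ge |a|$ it is a factor of $X_1 Y_1$, a prefix of $R_1$, so the engine yields either a contradiction or $S = R_1$, which fits inside $X_1 Y_1$ only when $Z_1 = \epsilon$, giving the permitted exception $S = X_1 Y_1$. For $n \ge 2$, set $Q = a X_1 Y_1' \dots X_{n-1} Y_{n-1}$, which one checks routinely to be an overlap prefix of length $n-1$ (its bulleted conditions are inherited, and $u$ contains the full occurrence of $R_{n-1}$, hence $X_{n-1} Y_{n-1}$). Let $p_n$ denote the position at which $X_n$ begins; then $P$ and $Q$ share the common prefix ending at $p_n$. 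If the occurrence of $S$ ends at or before $p_n$, it is a factor of this common prefix, hence of $Q$, and is excluded by induction (the only factor the inductive hypothesis permits in $Q$, namely $X_{n-1} Y_{n-1}$, ends strictly beyond $p_n$, so it cannot occur this early). If instead $S$ begins at or after $p_n$, then it is a factor of the suffix $X_n Y_n$, hence of $R_n$, and the engine again delivers either a contradiction or the exception $S = X_n Y_n$ with $Z_n = \epsilon$.

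The remaining case, which I expect to be the main obstacle and by far the most technical, is that $S$ \emph{straddles} the joint: its occurrence begins strictly before $X_n$ and ends strictly after $X_n$ begins. Here I would exploit that the content of $P$ to the right of $p_n$ is a prefix of $R_n$ starting with the piece $X_n$, while the overlapping occurrence of $R_{n-1}$ contributes its piece suffix $Z_{n-1}$ near the same region, and that $Y_S$ is \emph{not} a piece. The natural attack is a case analysis on the position of $p_n$ relative to the factorisation boundaries of $X_S Y_S Z_S$: when $p_n$ falls so that $Y_S$ is confined within a single piece (either $X_n$ or $Z_{n-1}$), $Y_S$ is a factor of a piece and hence itself a piece, a contradiction; in the complementary positions, comparing the overlap of $S$ with $R_{n-1}$ and $R_n$ and invoking the maximality of the relevant piece prefixes and suffixes should force one of the middle words $Y_S$, $Y_{n-1}$ or $Y_n$ to admit a factorisation into too few pieces, contradicting $C(4)$. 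The difficulty in this step is precisely that it requires reasoning about three overlapping relation-word occurrences at once and a careful enumeration of boundary positions, rather than a single clean inequality.
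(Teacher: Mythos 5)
Your opening moves are sound and coincide with the paper's: an occurrence of $S$ beginning inside $a$ would exhibit a factor of the form $X_0Y_0$ starting before the end of $a$, and your ``engine'' (a relation word occurring inside another relation word is a piece, contradicting $C(4)$, so it can only occur as the whole word) is a correct observation of exactly the kind the paper exploits. The trouble begins with your inductive step, and it stems from a misreading of the definition of overlap prefix. In the factorisation $a X_1 Y_1' \dots X_n Y_n$, only the truncated blocks $X_i Y_i'$ ($i<n$) actually occur in $u$: each $Y_i'$ is a \emph{proper} prefix of $Y_i$, and nothing in the definition makes the full $X_i Y_i$ --- still less $R_i = X_iY_iZ_i$ or its suffix $Z_i$ --- appear in $u$; each potential occurrence is cut off by the start of the next. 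So your justification that $Q = a X_1 Y_1' \dots X_{n-1} Y_{n-1}$ is an overlap prefix (``$u$ contains the full occurrence of $R_{n-1}$, hence $X_{n-1}Y_{n-1}$'') is false: $Q$ need not even be a prefix of $u$. This step is repairable --- $Q$ is an overlap prefix \emph{of the word $Q$ itself} --- but then the bullet ``no factor of the form $X_0Y_0$ begins before the end of $a$'' must be re-verified for $Q$, whose tail differs from $u$'s continuation; one can do this (such a factor would contain $X_1Y_1'$ as an interior factor, forcing the non-piece $X_1Y_1'$ to occur in two relation words or in two places in one), but you supply no argument.

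The unrepaired gap is your Case 3, the straddling occurrence, which is precisely the heart of the proposition and which you leave as a sketch (``should force \dots contradicting $C(4)$''). Worse, the sketch rests on the same misconception: there is no ``overlapping occurrence of $R_{n-1}$'' contributing ``its piece suffix $Z_{n-1}$ near the same region'', so a case analysis keyed to whether $Y_S$ lands inside $X_n$ or inside $Z_{n-1}$ is not available --- what sits to the left of $p_n$ is a suffix of $X_{n-1}Y_{n-1}'$, which need not be a piece at all. For comparison, the paper disposes of this case directly, with no induction: if $S$ contains some whole block $X_iY_i'$, then since $X_iY_i'$ is not a piece, $S = R_i$ occurring exactly where block $i$ begins; if instead $S$ lies inside two consecutive blocks, it is a product of a non-empty factor of $R_i$ with a factor of $R_{i+1}$, these cannot both be pieces, and the non-piece one again forces $S = R_i$ aligned with the start of block $i$ (the $R_{i+1}$ alternative contradicts the non-trivial overlap with block $i$). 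Then for $i<n$ the occurrence of $R_i$ spills past $X_iY_i'$, and either it swallows block $i+1$ whole, making the non-piece $X_{i+1}Y_{i+1}'$ a factor of $R_i$, or it ends inside block $i+1$, placing a suffix of $R_i$ strictly longer than $Z_i$ inside $R_{i+1}$ and contradicting the maximality of the piece suffix $Z_i$; for $i=n$ one gets exactly the permitted exception $Z_n=\epsilon$. Without a complete argument of this kind for the straddling case, your proposal establishes only the easy cases of the proposition.
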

\begin{proof}
Suppose that the given overlap prefix contains a
relation word $R$ as a factor. By the definition of an overlap prefix,
no occurrence of $R$ can begin before the end of the prefix $a$, so
we may assume that $R$ is a factor of $X_1 Y_1' X_2 Y_2' \dots X_n Y_n$.
It follows that either $R$ contains $X_i Y_i'$ as a factor for some $i$,
or else $R$ is a factor of $X_i Y_i' X_{i+1} Y_{i+1}'$ for some $i$ (where
$Y_{i+1}' = Y_n$ if $i+1 = n$) and we may assume without loss of generality
that the occurrence of $R$ overlaps non-trivially with the prefix $X_i Y_i'$.

In the former case, since $X_i$ is a maximal piece prefix of $X_i Y_i Z_i$
and $Y_i'$ is non-empty, $X_i Y_i'$ cannot be a piece; it follows then that
we must have $R = X_i Y_i Z_i$ with the occurrence in the obvious place. In
the latter case, $R$ is the product of a non-empty factor of $X_i Y_i Z_i$
with a factor of the $X_{i+1} Y_{i+1} Z_{i+1}$; but by the small overlap
assumption, $R$ cannot be written as a product of two pieces, so it must
again be that $R = X_i Y_i Z_i$ with the occurrence in the obvious place.

Now if $i = n$ then, since $R$ is a factor of the given relation prefix,
we must clearly have $R = X_i Y_i Z_i = X_i Y_i$ so that $Z_i = \epsilon$.
On the other hand, if $i < n$ then either $X_i Y_i Z_i$ contains
$X_{i+1} Y_{i+1}'$ as a factor, which
contradicts the fact that $X_{i+1}$ is a maximal piece prefix of $X_i Y_i Z_i$,
or else (recalling that $Y_i'$ is a proper prefix of $Y_i$) we see that
$X_{i+1} Y_{i+1}'$ contains a non-empty suffix of $Y_i$ followed by $Z_i$,
which contradicts the fact that $Z_i$ is a maximal piece suffix of
$X_i Y_i Z_i$.
\end{proof}

\begin{proposition}\label{prop_opgivesmop}
Let $u$ be a word. Every overlap prefix of $u$ is contained in a
clean overlap prefix of $u$.
\end{proposition}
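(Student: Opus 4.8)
The plan is to pass to a longest overlap prefix containing the given one and to show that maximality forces cleanliness. So, given an overlap prefix $P = b\,X_1 Y_1' \cdots X_n Y_n$ of $u$, I would consider the set of all overlap prefixes of $u$ admitting $P$ as a prefix. This set is non-empty (it contains $P$) and, since every prefix of $u$ has length at most $|u|$, it contains an element $Q$ of maximal length. As $Q$ manifestly contains $P$, it suffices to prove that $Q$ is clean.

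Suppose then, for contradiction, that $Q$ is not clean; write $Q = c\,X_m Y_m$ in its unique overlap-prefix factorisation of length $m$, with $c$ the appropriate initial segment. Failure of cleanliness means that $u$ has a prefix $c\,X_m Y_m'\,X_{m+1} Y_{m+1}$ in which $Y_m'$ is a proper non-empty prefix of $Y_m$ and $X_{m+1} Y_{m+1} Z_{m+1}$ is a relation word with maximal piece prefix $X_{m+1}$ and maximal piece suffix $Z_{m+1}$. The first thing I would check is that $Q' := c\,X_m Y_m'\,X_{m+1} Y_{m+1}$ is itself an overlap prefix of $u$, now of length $m+1$: the clause forbidding a factor $X_0 Y_0$ from beginning before the end of the initial segment is inherited verbatim from $Q$ (the initial segment being unchanged), the relation-word clauses hold for the indices up to $m$ by hypothesis and for $m+1$ by the failure of cleanliness, and the requirement that each $Y_i'$ be a proper non-empty prefix of $Y_i$ reduces in the single new case to the stated property of $Y_m'$.

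The main obstacle is to verify that this extension is genuinely longer, that is, that $X_{m+1} Y_{m+1}$ reaches beyond the end of $Y_m$; without this the maximality argument collapses. I would argue by contradiction: if $X_{m+1} Y_{m+1}$ ended at or before the end of $Y_m$ then, since $X_{m+1}$ starts immediately after the proper prefix $Y_m'$ of $Y_m$, the word $X_{m+1} Y_{m+1}$ would be a factor of $Y_m$, and hence of the relation word $R_m = X_m Y_m Z_m$. But $X_{m+1} Y_{m+1}$ cannot be a piece, since otherwise $R_{m+1} = (X_{m+1} Y_{m+1}) Z_{m+1}$ would be a product of two pieces, contradicting $C(4)$. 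On the other hand, if $R_{m+1} \neq R_m$ then $X_{m+1} Y_{m+1}$ would be a common factor of two distinct relation words and so a piece; while if $R_{m+1} = R_m$ then $X_{m+1} Y_{m+1} = X_m Y_m$ would occur both as the prefix of $R_m$ and, because $Y_m'$ is non-empty, at a strictly later position inside $Y_m$, again forcing it to be a piece. Either way we contradict the previous sentence.

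Granting this, $Q'$ is an overlap prefix of $u$ strictly longer than $Q$; being prefixes of the same word $u$, the shorter $Q$ is a prefix of $Q'$, so $Q'$ in turn contains $P$, contradicting the maximality of $Q$. Hence $Q$ is clean and contains $P$, as required. I expect the strict-length step to be the only substantive point, the small overlap hypothesis entering precisely there through the impossibility of expressing $R_{m+1}$ with too few pieces; the remainder is a routine verification of the overlap-prefix conditions together with the finiteness of $u$.
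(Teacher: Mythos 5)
Your proof is correct and is essentially the paper's own argument in extremal rather than inductive dress: the paper fixes $u$ and inducts on the length of the part of $u$ not covered by the overlap prefix (base case: an overlap prefix exhausting all of $u$ is vacuously clean), and its inductive step is precisely your extension of a non-clean overlap prefix to a strictly longer overlap prefix. The only substantive difference is that the paper dismisses the strict-length increase as obvious, whereas you justify it with the piece argument (ruling out $X_{m+1}Y_{m+1}$ landing inside $Y_m$) --- a worthwhile detail to make explicit, since that is exactly where the small overlap hypothesis enters.
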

\begin{proof}
We fix $u$ and prove by induction on the difference between the length of
$u$ and the length of the given overlap
prefix, that is, on the length of that part of $u$ not contained in the
given overlap prefix. For the base case, observe that an overlap prefix
constituting the whole of $u$ is necessarily clean. Now suppose
$a X_1 Y_1' \dots X_n Y_n$ is an overlap prefix, and that the result
holds for longer overlap prefixes of $u$. If the given prefix is clean then
there
is nothing to prove. Otherwise, by the definition of a clean overlap
prefix, there exist words $X$ and $Y$, being the
maximal piece prefix and the middle word respectively of some
relation word, and a proper non-empty prefix $Y_n'$ of $Y_n$ such that 
$$a X_1 Y_1' \dots X_n Y_n' XY$$
is a prefix of $u$. Clearly this is
an overlap prefix of $u$ which is strictly longer than the original one,
and so by induction is contained in a clean overlap prefix of $u$. But
now the original overlap prefix of is contained in a clean overlap prefix,
as required.
\end{proof}

\begin{corollary}\label{cor_nomopnorel}
If a word $u$ has no clean overlap prefix, then it contains
no relation word as a factor, and so if $u \equiv v$ then $u = v$.
\end{corollary}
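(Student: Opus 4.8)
The plan is to prove the contrapositive of the first assertion and then deduce the second assertion from it by an elementary rewriting argument. So first I would assume that $u$ \emph{does} contain some relation word as a factor, and aim to manufacture a clean overlap prefix from this.

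To that end, write the relation word in question as $XYZ$, with $X$, $Y$, $Z$ its maximal piece prefix, middle word and maximal piece suffix, and factorise $u = b\,XYZ\,c$ accordingly. Then the prefix $bXY$ of $u$ is, directly by the definition, a relation prefix: it is a prefix of $u$ carrying a factorisation into a maximal piece prefix followed by the corresponding middle word. Thus $u$ possesses at least one relation prefix. By the observation recorded just after the definition of an overlap prefix, the shortest relation prefix of $u$ must be an overlap prefix, so $u$ certainly has an overlap prefix. Now I would invoke Proposition~\ref{prop_opgivesmop}, which guarantees that this overlap prefix is contained in a clean overlap prefix of $u$. Hence $u$ does have a clean overlap prefix, and the contrapositive is established: if $u$ has no clean overlap prefix, then no relation word occurs as a factor of $u$.

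For the second assertion, I would recall that $u \equiv v$ means precisely that $u$ and $v$ are joined by a finite sequence of elementary transitions, each replacing an occurrence of one side of a relation by the other side. Applying any such transition to a word requires that word to contain a relation word as a factor. Since $u$ contains no relation word as a factor, no elementary transition can be applied to $u$, so the $\equiv$-class of $u$ is the singleton $\{u\}$; consequently $u \equiv v$ forces $u = v$.

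I do not expect a serious obstacle here: the statement follows almost immediately once Proposition~\ref{prop_opgivesmop} and the remark on shortest relation prefixes are in hand. The only points requiring genuine care are, first, checking that the factorisation $bXY$ really meets the definition of a relation prefix (so that the passage to an overlap prefix, and thence via Proposition~\ref{prop_opgivesmop} to a clean overlap prefix, is legitimate), and second, being explicit that the equivalence $\equiv$ is generated by factor-rewriting, so that a word containing no relation word as a factor is rewrite-inert and hence alone in its class.
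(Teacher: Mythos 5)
Your proposal is correct and follows essentially the same route as the paper's proof: a relation word factor yields a relation prefix, whose shortest instance is an overlap prefix, which by Proposition~\ref{prop_opgivesmop} lies inside a clean overlap prefix, contradicting the hypothesis; the second assertion then follows because a word with no relation word factor admits no rewrites. The only difference is presentational (you argue by contrapositive where the paper argues by contradiction), which is immaterial.
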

\begin{proof}
Suppose $u$ has no clean overlap prefix. 
If $u$ contained a relation word as a factor
then clearly it would have a relation prefix, that is, a prefix of the
form $a X_R Y_R$ for some relation word $R$. But by our observations
above, the shortest relation prefix of $u$ would be an overlap prefix,
and so by Proposition~\ref{prop_opgivesmop}, is contained in a clean
overlap prefix of $u$. Thus, $u$ contains no relation word as a factor.
It follows easily that no relations can be applied to $u$, so the only
word equivalent to $u$ is $u$ itself.
\end{proof}

\begin{lemma}\label{lemma_staysclean}
If $u = w XYZ u'$ with $w XY$ a clean overlap prefix then
$w \ol{XY}$ is a clean overlap prefix of $w \ol{XYZ} u'$.
\end{lemma}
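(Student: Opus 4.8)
The plan is to verify directly, from the definitions, that $w\ol{X}\,\ol{Y}$ is an overlap prefix of $w\ol{XYZ}u' = w\ol{X}\,\ol{Y}\,\ol{Z}u'$ and that it is clean. I would begin by writing the clean overlap prefix as $wXY = bX_1Y_1'\dots X_{n-1}Y_{n-1}'X_nY_n$ with $X_n=X$, $Y_n=Y$ and $R_n = XYZ$, so that $w = bX_1Y_1'\dots X_{n-1}Y_{n-1}'$. Since $w$ is untouched by the rewrite, the natural candidate factorisation for $w\ol{X}\,\ol{Y}$ is
$$bX_1Y_1'\dots X_{n-1}Y_{n-1}'\,\ol{X}\,\ol{Y},$$
obtained by replacing only the final factor $X_nY_n = XY$ by $\ol{X}\,\ol{Y}$. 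As $\ol{X}$ and $\ol{Y}$ are the maximal piece prefix and middle word of $\ol{R} = \ol{X}\,\ol{Y}\,\ol{Z}$, this is certainly a relation prefix, and every defining condition of an overlap prefix is inherited verbatim from the original factorisation \emph{except} the requirement that no factor of the form $X_0Y_0$ begin before the end of $b$. Thus the work reduces to two points: establishing this last condition for the new word, and establishing cleanliness.

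For the first point, suppose some $X_0Y_0$, arising from a relation word $R_0 = X_0Y_0Z_0$, begins at a position $p < |b|$ of $w\ol{R}u'$. If it ends within $w$ then, as $w$ is a common prefix of $u$ and $w\ol{R}u'$, the same factor begins at $p<|b|$ in $u$, contradicting the corresponding condition for the overlap prefix $wXY$. Otherwise it extends beyond position $|w|$, and the portion of $R_0$ lying beyond $|w|$ has the form $tZ_0$ with $t$ non-empty, and is a prefix of $\ol{R}u'$. Here I would split on length: if $|tZ_0| \leq |\ol{R}|$ then $tZ_0$ is simultaneously a suffix of $R_0$ and a prefix of $\ol{R}$, hence a piece, and since $t$ is non-empty it is a piece suffix of $R_0$ strictly longer than $Z_0$, contradicting the maximality of $Z_0$; if $|tZ_0| > |\ol{R}|$ then $R_0$ contains the whole relation word $\ol{R}$ as a factor beginning strictly after its start, which forces $\ol{R}$ itself to be a piece, impossible under $C(4)$.

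For cleanliness I would argue by contradiction in the same spirit. If $w\ol{X}\,\ol{Y}$ is not clean then $w\ol{R}u'$ has a prefix $w\ol{X}\,\ol{Y}'X_*Y_*$ with $\ol{Y}'$ a proper non-empty prefix of $\ol{Y}$ and $X_*Y_*$ the maximal piece prefix and middle word of some $R_* = X_*Y_*Z_*$; cancelling $w\ol{X}$ shows that $X_*Y_*$ begins strictly inside the middle word $\ol{Y}$. If $X_*Y_*$ ends inside $\ol{R}$ then it is a factor of $\ol{R}$, which makes $X_*Y_*$ a piece when $R_*\neq\ol{R}$ and makes $\ol{X}\,\ol{Y}$ a piece (occurring in two places in $\ol{R}$) when $R_*=\ol{R}$ — both impossible, the latter because $\ol{X}\,\ol{Y}$ cannot be a piece without violating $C(3)$. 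If instead $X_*Y_*$ runs past $\ol{R}$, then the part of $R_*$ inside $\ol{R}$ is a suffix $\ol{Y}''\ol{Z}$ of $\ol{R}$ with $\ol{Y}''$ a non-empty suffix of $\ol{Y}$, and this same word also occurs as a prefix of $R_*$; hence it is a piece suffix of $\ol{R}$ strictly longer than $\ol{Z}$, contradicting maximality of $\ol{Z}$.

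The routine parts are the inheritance of the structural conditions and the bookkeeping of positions and lengths. The main obstacle, and the only genuinely new content, is the straddling analysis in the second paragraph: ruling out relation-word structures of $w\ol{R}u'$ that begin in the unchanged prefix $w$ but reach across the boundary into the freshly inserted copy of $\ol{R}$. The unifying device throughout is the observation already exploited in Proposition~\ref{prop_overlapprefixnorel}, namely that an overlong piece prefix or piece suffix, or an embedded whole relation word, is forbidden by the maximality of $X$ and $Z$ (and their barred analogues) together with the condition $C(4)$.
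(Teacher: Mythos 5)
Your proof breaks down at the ``first point'': the claim you set out to verify there --- that in the new word $w\ol{XYZ}u'$ no factor of the form $X_0Y_0$ begins before the end of $b$ --- is simply false in general, so no argument can establish it. Consider the $C(4)$ presentation with relations $(pqr,xyz)$ and $(cxz,fgh)$: the pieces are $\epsilon$, $x$, $z$, so $pqr$ has $X=\epsilon$, $Y=pqr$, $Z=\epsilon$, its partner $xyz$ has $\ol{X}=x$, $\ol{Y}=y$, $\ol{Z}=z$, and $cxz$ has $X_0=\epsilon$, $Y_0=cx$, $Z_0=z$. Take $u=cpqr$, so $w=b=c$ and $wXY=cpqr$ is a clean overlap prefix of $u$ (no factor among $pqr$, $xy$, $cx$, $fgh$ starts at position $0$ of $u$). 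But in the rewritten word $w\ol{XYZ}u'=cxyz$ the factor $X_0Y_0=cx$ \emph{does} begin at position $0<|b|$. The lemma is nevertheless true here, because $cxy$ is an overlap prefix of $cxyz$ via a \emph{different, longer} factorisation, namely $b'X_1Y_1'X_2Y_2$ with $b'=\epsilon$, $X_1Y_1'=\epsilon\cdot c$ (where $c$ is a proper non-empty prefix of $Y_0=cx$) and $X_2Y_2=xy$. This restructuring is exactly the content of the paper's proof in its delicate case (its case $n=0$): rather than ruling such factors out, the paper shows, using the fact that $wXY$ was a (clean) overlap prefix of $u$, that any such factor forces $a=bX_0Y_0'$ with $Y_0'$ a \emph{proper} non-empty prefix of $Y_0$, which then serves as the extra link in a longer valid overlap-prefix factorisation. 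Your proof misses this possibility entirely; it assumes the witnessing factorisation can always keep the same prefix part $b$.

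The technical slip that makes your boundary-crossing analysis appear to close the case is a conflation of the factor $X_0Y_0$ with the full relation word $R_0$: you assert that the portion lying beyond position $|w|$ ``has the form $tZ_0$ with $t$ non-empty''. But only $X_0Y_0$ is assumed to occur as a factor, so the overhanging portion is merely a suffix of $X_0Y_0$, and $Z_0$ need not occur in the word at all. A suffix of $X_0Y_0$ that is also a prefix of $\ol{R}$ can perfectly well be a piece --- in the example above it is the piece $x$ --- without contradicting the maximality of $Z_0$, since that maximality constrains suffixes of $R_0$ itself, not suffixes of $X_0Y_0$. (For overlap prefixes of length at least $2$ your conclusion does hold, but for the reason the paper gives --- an offending factor would contain $X_1Y_1'$, which is not a piece --- not for yours.) Your cleanliness argument in the final paragraph is essentially the paper's and is sound; it is the overlap-prefix half that needs the paper's case split and refactorisation.
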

\begin{proof}
Let 
\begin{equation}\label{eqn_smop1}
w XY = a X_1 Y_1' \dots X_n Y_n' X Y
\end{equation}
be the factorisation given by the definition of a clean overlap prefix.
Then $w \ol{XYZ} u'$ has a prefix
\begin{equation}\label{eqn_smop2}
w \ol{XY} = a X_1 Y_1' \dots X_n Y_n' \ol{X} \ol{Y}
\end{equation}
If $n \geq 1$ it is immediate from the factorisation given
by \eqref{eqn_smop2} that $w \ol{XY}$ is an overlap prefix of $w \ol{XYZ} u'$.
In the case $n = 0$, however, we must consider the possibility that the
prefix $a \ol{XY} = w \ol{XY}$ contains a factor of the form $X_0 Y_0$
overlapping the final initial segment $a$. Suppose it does. Then recalling
that $Y_0$ is not a piece, and so cannot be a factor of $\ol{XY}$, we see
that $a \ol{XY}$ admits a factorisation
\begin{equation}
a \ol{XY} = b X_0 Y_0' \ol{XY}
\end{equation}
for some non-empty prefix $Y_0'$ or $Y_0$. Moreover, $Y_0'$ must be a proper
prefix of $Y_0$, or else $a$ would have a factor $X_0 Y_0$, contradicting
the fact that $w XY$ was a clean overlap prefix of $u$. This shows that
$w \ol{X} \ol{Y}$ is an overlap prefix of $w \ol{XYZ} u'$.

It remains to show that the given overlap prefix is clean. Suppose for a
contradiction that it is not. Then by definition, there is a factor of
the form $\hat{X} \hat{Y}$ overlapping
the end of the prefix $a \ol{XY}$; but this factor is either by contained in
$\ol{XYZ}$ (contradicting the supposition that $\hat{X}$ is a
maximal piece prefix of a relation word $\hat{X} \hat{Y} \hat{Z}$)
or contains a non-empty suffix of $\ol{Y}$ followed by $\ol{Z}$
(contradicting the assumption that $\ol{Z}$ is a maximal piece suffix of
$\ol{X Y Z}$). 
\end{proof}

The following lemma is fundamental to our approach to $C(4)$ monoids.
With careful application it seems to permit a comparable understanding to that resulting
from Remmers' geometric theory, but in a purely combinatorial (and
hence more computationally orientated) way.

\begin{lemma}\label{lemma_overlap}
Suppose a word $u$ has clean overlap prefix $w X Y$. If
$u \equiv v$ then $v$ has overlap prefix either
$w X Y$ or $w \ol{X Y}$, and no relation word occurring as a
factor of $v$ overlaps this prefix, unless it is $X Y Z$ or
$\ol{X Y Z}$ as appropriate.
\end{lemma}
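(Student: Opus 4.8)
The plan is to argue by induction on the length $k$ of a derivation $u = u_0 \equiv u_1 \equiv \cdots \equiv u_k = v$, in which each step $u_j \equiv u_{j+1}$ is a single application of a defining relation. Before beginning I would record the observation that, for an overlap prefix $p$ of a word, the extra clause ``no relation word occurring as a factor overlaps $p$, except the natural $X Y Z$ (or $\ol{X Y Z}$)'' is equivalent to $p$ being \emph{clean}: a relation word overlapping $p$ at any non-natural position is precisely a witness to non-cleanness, while conversely a clean overlap prefix admits no such witness by the maximal-piece arguments below. Thus the assertion to be proved for $v$ is equivalent to saying that $v$ has a clean overlap prefix equal to $wXY$ or to $w\ol{XY}$, and I would prove this strengthened form by induction, carrying the hypothesis that each $u_j$ has a clean overlap prefix $p_j \in \{wXY, w\ol{XY}\}$.

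The first ingredient is an auxiliary claim: \emph{if a word $t$ has clean overlap prefix $wXY$, then the only relation word occurring as a factor of $t$ and overlapping $wXY$ is $XYZ$, occurring in the natural place} (so in particular this forces $t = wXYZt'$). For a relation word contained in $wXY$ this is Proposition~\ref{prop_overlapprefixnorel}. For one straddling the right-hand end of $wXY$ I would reproduce the reasoning of Proposition~\ref{prop_overlapprefixnorel}: condition~2 in the definition of an overlap prefix forbids any occurrence beginning before the end of $a$, and the maximality of the piece prefixes $X_i$ and piece suffixes $Z_i$, together with the fact that no relation word is a product of two pieces, excludes every remaining starting position except the beginning of the final $X_n = X$; cleanness disposes of the one further possibility, that the relation word begins inside the final $Y_n = Y$. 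This yields the claim, and with it the base case $k = 0$ of the induction.

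For the inductive step I would take a single rewrite $u_j \to u_{j+1}$ replacing a relation-word factor $R_0$ by $\ol{R_0}$, assume $p_j = wXY$ (the case $p_j = w\ol{XY}$ being symmetric, with $XYZ$ and $\ol{XYZ}$ interchanged, since $\ol{\ol{R}} = R$), and split on the position of $R_0$. If $R_0$ overlaps $p_j$, the auxiliary claim forces $R_0 = XYZ$ in the natural place, so $u_j = wXYZu'$ and $u_{j+1} = w\ol{XYZ}u'$; Lemma~\ref{lemma_staysclean} then gives that $w\ol{XY}$ is a clean overlap prefix of $u_{j+1}$, and we may take $p_{j+1} = w\ol{XY}$. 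If instead $R_0$ lies entirely to the right of $p_j$ (at position $\ge |wXY|$), the prefix $wXY$ is literally unchanged; that it remains an \emph{overlap} prefix is easy, since any factor of the form $X_0Y_0$ newly straddling $a$ would contain $X_nY_n = XY$ as a proper factor, forcing $XY$ to be a piece and hence $XYZ$ to be a product of two pieces, against $C(3)$.

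The hard part will be the cleanness of $wXY$ in this last case: I must show that a rewrite strictly inside the tail cannot create a relation word straddling the right-hand end of $wXY$. Such a relation word $X_1Y_1Z_1$ would have to begin inside the final $Y$, at the start of a nonempty suffix $Y''$ of $Y$ (writing $Y = Y'Y''$); then $Y''$ is a piece, being a common factor of $XYZ$ and $X_1Y_1Z_1$ (or occurring in two places of a single relation word), so maximality of the piece prefix forces $X_1 = Y''s_0$ to straddle the boundary. The plan is to derive a contradiction from this configuration using the maximality of the piece suffix $Z$ of $XYZ$ and the prohibition on expressing relation words as products of too few pieces --- the same style of case analysis as in Proposition~\ref{prop_overlapprefixnorel} and Lemma~\ref{lemma_staysclean}. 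This boundary interaction between the fixed prefix and the rewritten tail is where essentially all of the combinatorial work is concentrated and where the full strength of $C(4)$ is used; once it is settled, the two cases close the induction, and the equivalence recorded at the outset converts the strengthened statement into the conclusion as stated.
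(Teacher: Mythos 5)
Your reduction of the lemma to a strengthened ``clean'' version is where the proof breaks down, and it breaks at precisely the point you label the hard part. First, the equivalence you record at the outset fails in the direction you need: non-cleanness of an overlap prefix $wXY$ is witnessed by a factor of the form $X_1Y_1$ (a maximal piece prefix followed by a middle word) beginning inside $Y$, and such a witness need not extend to an occurrence of the full relation word $X_1Y_1Z_1$ inside the word; so ``no relation word overlaps the prefix except the natural one'' is strictly weaker than cleanness. Second, and fatally, the strengthened inductive hypothesis --- that every intermediate word $u_j$ has a \emph{clean} overlap prefix equal to $wXY$ or $w\ol{XY}$ --- is simply false. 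A rewrite $R_0 \to \ol{R_0}$ applied entirely to the right of the prefix can destroy cleanness: it can create a factor $\hat{X}\hat{Y}$ beginning inside the final $Y$, namely when $u_{j+1}$ has a prefix $a X_1 Y_1' \dots X_n Y_n' \hat{X} \hat{Y}' \ol{R_0}$ where $\hat{Y} = \hat{Y}'\hat{Y}''$ with $\hat{Y}'$ a proper non-empty prefix and $\hat{Y}''$ a prefix of $\ol{R_0}$. No contradiction can be extracted from this configuration: the occurrence of the piece $\hat{X}$ straddles the boundary and need not lie inside any relation word, and $\hat{Y}''$, being a factor of two distinct relation words, is a piece and may perfectly well be a prefix of the maximal piece prefix of $\ol{R_0}$. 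All the $C(4)$ constraints can be met simultaneously, so the contradiction your plan depends on does not exist.

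The paper handles exactly this case, and not by contradiction: when cleanness of $wXY$ is destroyed in $u_1$, it shows that $u_1$ acquires a strictly \emph{longer} clean overlap prefix $a X_1 Y_1' \dots X_n Y_n' \hat{X} \hat{Y}' X_R Y_R$ (which is neither $wXY$ nor $w\ol{XY}$, though it contains $wXY$ as a prefix), applies the inductive hypothesis to that longer prefix, and then extracts the weaker stated conclusion --- that $v$ still begins with $wXY$ and that no relation word of $v$ overlaps $wXY$ at all. This is precisely why the lemma is formulated with the weaker overlap clause rather than with cleanness: the weaker property is stable under rewrites in the tail, whereas cleanness is not, so your strengthening makes the inductive step false rather than easier. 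The other cases of your induction (the base case, the rewrite of $XYZ$ in the natural place handled via Lemma~\ref{lemma_staysclean}, and the preservation of the overlap-prefix property itself) do match the paper's argument; the case you could not close is the heart of the proof.
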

\begin{proof}
Since $w X Y$ is an overlap prefix of $u$, it has by definition a
factorisation
$$w XY = a X_1 Y_1' \dots X_{n} Y_{n}' X Y$$
for some $n \geq 0$. We use this fact to prove the claim by induction on
the length $r$ of a rewrite sequence (using the defining relations) from
$u$ to $v$. 

In the case $r = 0$, we have $u = v$, so $v$ certainly has (clean) overlap
prefix $v XY$.
By Proposition~\ref{prop_overlapprefixnorel}, no relation word factor can occur entirely
within this prefix (unless it is $X Y$ and $Z = \epsilon$). If
a relation word factor of $v$ overlaps the end of the given overlap prefix
and entirely contains $XY$ then, since $XY$ is not a piece, that
relation word must clearly be $XYZ$. Finally,
a relation word cannot overlap the end of the given overlap prefix but
not contain the suffix $XY$, since this would clearly contradicts the
fact that the given overlap prefix is clean.

Suppose now for induction that the lemma holds for all values less than $r$,
and that there is a rewrite sequence from $u$ to $v$ of length $r$. Let
$u_1$ be the second term in the sequence, so that $u_1$ is obtained from
$u$ by a single rewrite using the defining relations, and $v$ from $u_1$
by $r-1$ rewrites.

Consider the relation word in $u$ which is to be rewritten in order to
obtain $u_1$, and in
particular its position in $u$. By Proposition~\ref{prop_overlapprefixnorel},
this relation word cannot be contained in the clean overlap prefix $w XY$,
unless it is $X Y$ where $Z = \epsilon$.

Suppose first that the relation word to be rewritten contains the final
factor $Y$
of the given clean overlap prefix. (Note that this covers in particular the
case that the relation word is $XY$ and $Z = \epsilon$.)
From the $C(4)$ assumption we know that $Y$ is not a piece, so we may deduce
that the relation word is $X Y Z$ contained in the obvious place. In
this case, applying the rewrite clearly leaves $u_1$ with a prefix 
$w \ol{XY}$, and by Lemma~\ref{lemma_staysclean}, this is a clean overlap
prefix.  Now $v$ can be obtained from
$u_1$ by $r-1$ rewrite steps, so it follows from the inductive hypothesis
that $v$ has overlap prefix either
$w \ol{XY}$ or $w \ol{\ol{XY}} = w XY$,
and that no relation word occurring as a factor of $v$ overlaps this
prefix, unless it is $X Y Z$ or $\ol{X Y Z}$ as appropriate; this
completes the proof in this case.

Next, we consider the case in which the relation word factor in $u$ to be
rewritten does not contain the final factor $Y_n$ of the clean overlap
prefix, but does overlap with the end of the clean overlap prefix. Then
$u$ has a factor of the form $X Y$, where $X$ is the maximal piece prefix
and $Y$ the middle word of a relation word, which overlaps $X_n Y_n$,
beginning after the start of $Y_n$. This clearly contradicts the assumption
that the overlap prefix is clean.

Finally, we consider the case in which the relation word factor in $u$
which is to be rewritten does not overlap the given clean overlap prefix
at all. Then obviously, the given clean overlap prefix of $u$ remains an
overlap prefix of $u_1$. If this overlap prefix is clean, then a simple
application of the inductive hypothesis again suffices to prove that $v$
has the required property.

There remains, then, only the case in which the given overlap prefix is
no longer clean in $u_1$. Then by definition there exist words $X$ and
$Y$, being a maximal piece prefix and middle word respectively of some relation
word, such
that $u_1$ has the prefix
$$a X_1 Y_1' \dots X_{n-1} Y_{n-1}' X_n Y_n' X Y$$
for some proper, non-empty prefix $Y_n'$ of $Y_n$.
Now certainly this is not a prefix of $u$, since this would contradict
the assumption that $a X_1 Y_1' \dots X_n Y_n$
 is a clean overlap
prefix of $u$. So we deduce that $u_1$ must contain a relation word
overlapping the final $XY$. This relation word cannot contain the final
factor $XY$, since this would again contradict the assumption that
$a X_1 Y_1' \dots X_n Y_n$ is a clean overlap prefix of $u$. Nor can
the relation word contain the final factor $Y$, since $Y$ is not a piece.
Hence, $u_1$ must have a prefix
$$a X_1 Y_1' \dots X_{n-1} Y_{n-1}' X_n Y_n' X Y' R$$
for some relation word and proper, non-empty prefix $Y'$ of $Y$ and
some relation word $R$. Suppose $R = X_R Y_R Z_R$ where $X_R$ and $Z_R$
are the maximal piece prefix and suffix respectively. Then it is readily
verified that
$$a X_1 Y_1' \dots X_{n-1} Y_{n-1}' X_n Y_n' X Y' X_R Y_R$$
is a clean overlap prefix of $u_1$. But now by the inductive
hypothesis, $v$ has prefix either
\begin{equation}\label{vprefix1}
a X_1 Y_1' \dots X_{n-1} Y_{n-1}' X_n Y_n' X Y' X_R Y_R
\end{equation}
or
\begin{equation}\label{vprefix2}
a X_1 Y_1' \dots X_{n-1} Y_{n-1}' X_n Y_n' X Y' \ol{X_R Y_R}
\end{equation}
and so in particular it certainly has prefix
$$a X_1 Y_1' \dots X_{n-1} Y_{n-1}' X_n Y_n' X Y'$$
which in turn is easily seen to have prefix
\begin{equation}\label{vprefix3}
a X_1 Y_1' \dots X_{n-1} Y_{n-1}' X_n Y_n.
\end{equation}
Moreover, by Proposition~\ref{prop_overlapprefixnorel}, the prefix
\eqref{vprefix1} or \eqref{vprefix2} of $v$ contains no relation
word as a factor (unless it is the final factor $X_R Y_R$ and
$Z_R = \epsilon$) and it follows easily that no relation word factor
overlaps the prefix \eqref{vprefix3} of $v$.
\end{proof}

The lemma has the following easy corollary.

\begin{corollary}\label{cor_noncleanprefix}
Suppose a word $u$ has (not necessarily clean) overlap prefix
$w XY$. If $u \equiv v$ then $v$ has a
prefix $w$ and contains no relation word overlapping this prefix.
\end{corollary}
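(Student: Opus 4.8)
The plan is to reduce to the clean case already settled by Lemma~\ref{lemma_overlap}. The given overlap prefix $wXY$ need not be clean, but by Proposition~\ref{prop_opgivesmop} it is contained in a clean overlap prefix of $u$; write this larger prefix as $w' X_m Y_m$, where $X_m Y_m$ is its final factor and $w'$ is everything preceding it. The first thing I would record is that passing from $wXY$ to this clean overlap prefix only extends the prefix to the right: inspecting the construction in the proof of Proposition~\ref{prop_opgivesmop}, the clean overlap prefix begins with $wX$ (possibly with the final middle word $Y$ truncated to a proper prefix before further factors are appended). In particular $w$ is a prefix of $w'$, so $|w'| \geq |w|$, and the final factor $X_m Y_m$ begins at a position no earlier than the end of $w$.

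Next I would apply Lemma~\ref{lemma_overlap} to the clean overlap prefix $w' X_m Y_m$ of $u$. Since $u \equiv v$, the lemma yields that $v$ has overlap prefix either $w' X_m Y_m$ or $w' \ol{X_m Y_m}$; in either case $w'$, and hence $w$, is a prefix of $v$. This establishes the first assertion.

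For the second assertion I would invoke the remaining conclusion of Lemma~\ref{lemma_overlap}: no relation word occurring as a factor of $v$ overlaps the prefix $w' X_m Y_m$ (respectively $w' \ol{X_m Y_m}$), with the sole possible exception of the final factor $X_m Y_m Z_m$ (respectively $\ol{X_m Y_m Z_m}$). Any relation word factor of $v$ overlapping the shorter prefix $w$ would a fortiori overlap this longer prefix, and so would have to be this exceptional factor. But that factor begins at the start of $X_m$ (respectively $\ol{X_m}$), which sits at position $|w'| \geq |w|$, that is, at or beyond the end of $w$; hence it does not overlap $w$. Therefore no relation word factor of $v$ overlaps $w$, as required.

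The routine but genuinely delicate point --- and the step I expect to need the most care --- is the bookkeeping of positions in the first paragraph: verifying that enlarging $wXY$ to a clean overlap prefix leaves $w$ untouched as an initial segment and pushes the only potentially troublesome relation word (the final factor supplied by Lemma~\ref{lemma_overlap}) to a position at or beyond the end of $w$. Once this is pinned down, both assertions follow immediately from Lemma~\ref{lemma_overlap}.
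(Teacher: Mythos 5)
Your proof is correct and takes essentially the same approach as the paper's: pass to a clean overlap prefix $w'X_mY_m$ via Proposition~\ref{prop_opgivesmop}, apply Lemma~\ref{lemma_overlap} to it, and note that $w$ is a prefix of $w'$ so that the only permitted relation word occurrence (the final $X_mY_mZ_m$ or $\ol{X_mY_mZ_m}$) begins at or beyond the end of $w$. The paper compresses your positional bookkeeping into the single remark that ``it is easily seen that $w'$ must be at least as long as $w$,'' but the content is identical.
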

\begin{proof}
By Proposition~\ref{prop_opgivesmop} the overlap prefix $wXY$
of $u$ is contained in a clean overlap prefix $w' X' Y'$ of $u$. Now
by Lemma~\ref{lemma_overlap}, $v$ has a prefix $w'$ and contains no relation
word overlapping this prefix. But it is easily seen that $w'$ must be at
least as long as $w$, so that $v$ has a prefix $w$ and contains no relation
word overlapping this prefix, as required.
\end{proof}

The following proposition describes a very weak left cancellation property
of small overlap monoids; it will allow us to restrict
attention to words with a prefix of the form $X Y$ where $X$ and $Y$ are
the maximal piece prefix and middle word respectively of some relation word.
\begin{proposition}\label{prop_dumpprefix}
Suppose a word $u$ has an overlap prefix $a X Y$ and that
$u = a X Y u''$. Then $u \equiv v$ if and only if $v = a v'$ where
$v' \equiv X Y u''$.
\end{proposition}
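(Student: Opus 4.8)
The plan is to prove the two implications separately, the backward one being essentially trivial and the forward one carrying all the content. For the backward direction, suppose $v = a v'$ with $v' \equiv X Y u''$. Since $\equiv$ is a congruence on the free monoid, it is compatible with concatenation, so $v = a v' \equiv a (X Y u'') = a X Y u'' = u$, giving $u \equiv v$ at once.

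For the forward direction, suppose $u \equiv v$ and fix a rewriting sequence $u = w_0 \to w_1 \to \dots \to w_r = v$ using the defining relations. The heart of the argument is to show that every single rewrite in this sequence takes place strictly to the right of the prefix $a$, so that $a$ is never disturbed. I would obtain this by applying Corollary~\ref{cor_noncleanprefix} not merely to the endpoint $v$ but to each intermediate word $w_i$. Indeed, every $w_i$ satisfies $u \equiv w_i$ (it is a prefix of the sequence), and $u$ has the overlap prefix $a X Y$ by hypothesis; hence the corollary applies with $w = a$ and tells us that $w_i$ has prefix $a$, say $w_i = a s_i$, and that no relation word occurring as a factor of $w_i$ overlaps this prefix. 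In particular no relation word factor of $w_i$ can begin at a position inside $a$, so the relation word rewritten at the step $w_i \to w_{i+1}$ must begin at a position at or beyond the end of $a$, that is, entirely within the tail $s_i$.

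Granting this, the prefix $a$ is literally unchanged by every rewrite, so each step $w_i \to w_{i+1}$ restricts to a rewrite $s_i \to s_{i+1}$ of the tails. Reading off the endpoints gives $s_0 = X Y u''$ (since $u = a X Y u''$) and $s_r = v'$ (since $v = a v'$), and the induced sequence $X Y u'' = s_0 \to s_1 \to \dots \to s_r = v'$ witnesses $X Y u'' \equiv v'$, as required. I expect the one genuinely delicate point to be exactly this control of the intermediate words: a priori a rewrite could conceivably create or exploit a relation word straddling the boundary of $a$, and it is only because Corollary~\ref{cor_noncleanprefix} (resting ultimately on Lemma~\ref{lemma_overlap}) forbids any relation word from overlapping the prefix in \emph{any} word equivalent to $u$ that this possibility is ruled out uniformly along the whole sequence. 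Once that observation is in place, the remaining steps are routine bookkeeping.
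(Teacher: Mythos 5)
Your proposal is correct and follows essentially the same route as the paper's own proof: both directions match, and in particular the forward direction rests on applying Corollary~\ref{cor_noncleanprefix} to every intermediate word of a rewriting sequence from $u$ to $v$ (each being equivalent to $u$), so that every term keeps the prefix $a$ untouched and the rewrites restrict to the tails, yielding $X Y u'' \equiv v'$. The only cosmetic difference is that the paper also cites Proposition~\ref{prop_overlapprefixnorel} in passing, but the substance of the argument is identical to yours.
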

\begin{proof}
Clearly if $v = av'$ with $v' \equiv X_1 Y_1 u''$ then it is immediate
that $v = av' \equiv a X_1 Y_1 u'' = v$.

Conversely, suppose $u \equiv v$. Since $a X Y$ is an overlap prefix,
by Proposition~\ref{prop_overlapprefixnorel} it cannot contain a relation word
starting before the end of $a$. By Corollary~\ref{cor_noncleanprefix},
$v$ has prefix $a$, say $v = a v'$. Now consider a rewrite sequence, using
the defining relations, from $u$ to $v$.
Again using Corollary~\ref{cor_noncleanprefix}, every term in this
sequence will have prefix $a$, and contain no relation word overlapping
this prefix. It follows that the same sequence of rewrites can be applied
to take $X_1 Y_1 u''$ to $v'$, so that $v' \equiv X_1 Y_1 u''$ as required.
\end{proof}

We now introduce some more terminology. Let $u$ be a word
with shortest relation prefix $a X Y$, and let $p$ be a piece. We say that
$u$ is \textit{$p$-inactive} if $p u$ has shortest relation prefix $p a X Y$
and \textit{$p$-active} otherwise. The following proposition describes another
weak cancellation property of small overlap monoids.

\begin{proposition}\label{prop_inactive}
Let $u$ be a word and $p$ a piece.
If $u$ is $p$-inactive then $p u \equiv v$ if and only if $v = p w$
for some $w$ with $u \equiv w$.
\end{proposition}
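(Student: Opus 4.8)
The plan is to reduce everything to Proposition~\ref{prop_dumpprefix}: the content of $p$-inactivity is precisely that it supplies the hypothesis needed to apply that proposition to $pu$ after prepending $p$. Since the very notion of $p$-inactivity presupposes that $u$ has a shortest relation prefix, let $aXY$ be this prefix and write $u = aXYu''$. The forward implication requires no work and does not even use $p$-inactivity: if $v = pw$ with $u \equiv w$, then because $\equiv$ is a congruence we get $pu \equiv pw = v$.

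For the converse, suppose $pu \equiv v$. By definition of $p$-inactivity, $pu$ has shortest relation prefix $paXY$, with the \emph{same} distinguished factors $X$ and $Y$ (the maximal piece prefix and middle word of the relation word $XYZ$) as occur for $u$. Since the shortest relation prefix of any word is an overlap prefix, $paXY$ is an overlap prefix of $pu$, and writing $pu = (pa)\,XY\,u''$ I would apply Proposition~\ref{prop_dumpprefix} with the discarded prefix taken to be $pa$ rather than $a$. This immediately yields $v = (pa)v'$ for some word $v'$ with $v' \equiv XYu''$.

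It then remains only to repackage this conclusion. Setting $w = av'$, we have $v = pav' = pw$, and since $v' \equiv XYu''$ and $\equiv$ is a congruence we obtain $w = av' \equiv aXYu'' = u$. Thus $w$ satisfies both $v = pw$ and $u \equiv w$, completing the converse.

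The genuinely substantive point is the observation in the second paragraph that $p$-inactivity forces the relation structure of $u$ to shift rigidly to the right by $|p|$ when $p$ is prepended, so that $paXY$ really is an overlap prefix of $pu$ with the identical factors $X, Y$; this is exactly what licenses invoking Proposition~\ref{prop_dumpprefix} with $pa$ as the dumped prefix. I expect this to be the only step needing care — one must be certain that prepending $p$ creates no relation word overlapping the initial segment $pa$, which is precisely what $p$-inactivity rules out — whereas the remaining manipulations are routine congruence bookkeeping.
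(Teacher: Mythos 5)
Your proof is correct and is essentially identical to the paper's: both use the fact that the shortest relation prefix $paXY$ of $pu$ (guaranteed by $p$-inactivity) is an overlap prefix, apply Proposition~\ref{prop_dumpprefix} with discarded prefix $pa$ to get $v = pav'$ with $v' \equiv XYu''$, and then repackage via $w = av'$. The forward direction is dismissed as obvious in the paper exactly as you handle it by the congruence property.
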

\begin{proof}
Suppose $u$ has shortest relation prefix $aXY$, so that $pu$ has
shortest relation prefix $paXY$. Suppose $u = aXY u''$.
If $pu \equiv v$ then by Proposition~\ref{prop_dumpprefix} (since
the shortest relation prefix is clearly an overlap prefix), we
have $v = pa v'$ where $v' \equiv XY u''$. Now setting $w = av'$ we
have $v = pw$ and $u = a XY u' \equiv av' = aw$.
The converse implication is obvious.
\end{proof}

\begin{proposition}\label{prop_coactive}
Let $Z_1$ and $Z_2$ be maximal piece suffixes of relation words and suppose
$u$ is $Z_1$-active and $Z_2$-active. Then $Z_1$ and $Z_2$ have a
common non-empty suffix, and if $z$ is the maximal common suffix then
\begin{itemize}
\item[(i)] $u$ is $z$-active;
\item[(ii)] $Z_1 u \equiv v$ if and only if $v = z_1 v'$ where $z_1 z = Z_1$ and
            $v' \equiv z u$; and
\item[(iii)] $Z_2 u \equiv v$ if and only if $v = z_2 v'$ where $z_2 z = Z_2$;
            and $v' \equiv z u$.
\end{itemize}
\end{proposition}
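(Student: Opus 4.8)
The plan is to extract from $Z_1$- and $Z_2$-activity a single relation word and a single overlap governing both, and then to strip off the inert part of each $Z_i$ using Proposition~\ref{prop_dumpprefix}.

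First I would describe $Z_i$-activity concretely. Let $aXY$ be the shortest relation prefix of $u$, so that $Z_i$-activity says the shortest relation prefix of $Z_i u$ is strictly shorter than $Z_i a X Y$. The relation word responsible for it cannot lie inside $Z_i$: since $Z_i$ is a piece, every factor of a piece is a piece, yet under $C(4)$ no word of the form $X'Y'$ (maximal piece prefix followed by middle word) is a piece, as otherwise its factor $Y'$ would be. Nor can it lie wholly inside $u$, for then it would produce a relation prefix of $u$ of length at least $|aXY|$, hence a relation prefix of $Z_i u$ of length at least $|Z_i a X Y|$. So there is a relation word $R_i = X_i' Y_i' Z_i'$ straddling the junction, meeting $Z_i$ in a non-empty suffix $c_i$; being a prefix of $R_i$ and a factor of the piece $Z_i$, this $c_i$ is itself a piece, and therefore a prefix of the maximal piece prefix $X_i'$. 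Writing $X_i' = c_i e_i$, this shows $u$ begins with $e_i Y_i'$, and comparing lengths of shortest relation prefixes gives $|e_i Y_i'| < |aXY|$.

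The crux is to show these data agree for $i = 1, 2$. The prefixes $e_1 Y_1'$ and $e_2 Y_2'$ of $u$ are comparable, so after possibly swapping the indices $e_1 Y_1'$ is a prefix of $e_2 Y_2'$. Locating the middle word $Y_1'$ inside $e_2 Y_2'$, which is a factor of $R_2$, I would argue that unless this occurrence coincides exactly with that of $Y_2'$ as the middle word of $R_2$, the word $Y_1'$ occurs either in two distinct relation words or in two places within one, making it a piece and contradicting $C(4)$. Forcing the occurrences to coincide yields $Y_1' = Y_2'$, and a shared middle word then forces $R_1 = R_2 =: R$, whence $e_1 = e_2$ and $c_1 = c_2 =: c$. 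This case analysis, sorting through the possible misalignments using only that middle words and factors of pieces behave as $C(4)$ dictates, is where essentially all the effort goes and is the \emph{main obstacle}. It shows $c$ is a common non-empty suffix of $Z_1$ and $Z_2$; let $z$ be their maximal common suffix, so $c$ is a suffix of $z$, say $z = wc$, and write $u = e Y' u''$ so that $c u = X' Y' u''$.

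Finally I would read off the three assertions. For (i), from $zu = w\,c\,u = w\,X'Y'\,u''$ we obtain a relation prefix $w X' Y'$ of $zu$ of length $|z| + |eY'| < |z| + |aXY|$, so $zu$ has shortest relation prefix shorter than $z a X Y$ and $u$ is $z$-active. For (ii), put $Z_1 = z_1 z$; then $Z_1 u = z_1 w\,X'Y'\,u''$ has shortest relation prefix $z_1 w\,X'Y'$, which is therefore an overlap prefix, so Proposition~\ref{prop_dumpprefix} gives $Z_1 u \equiv v$ if and only if $v = z_1 w v'$ with $v' \equiv X'Y' u'' = c u$. Setting $v'' = w v'$ rewrites this as $v = z_1 v''$ with $v'' \equiv w c u = z u$, which is exactly the claim; the converse is immediate, since $z u \equiv v''$ gives $Z_1 u = z_1\,z u \equiv z_1 v'' = v$. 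Statement (iii) follows verbatim with $Z_2$ in place of $Z_1$.
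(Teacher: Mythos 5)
Your proof is correct and takes essentially the same route as the paper's: both arguments characterise $Z_i$-activity by a relation word straddling the junction between $Z_i$ and $u$, use the facts that factors of pieces are pieces and that middle words are not pieces to force the two straddling relation words to coincide and align (your $c_i$ and $e_i$ are the paper's $p$ and $X_3'$), and then obtain (i) directly and (ii), (iii) by applying Proposition~\ref{prop_dumpprefix} to the shortest relation prefix of $Z_i u$. There are no gaps worth flagging.
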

\begin{proof}
Let $b X_3 Y_3$ and $c X_4 Y_4$ be the shortest relation prefixes of
$Z_1 u$ and $Z_2 v$ respectively. Since $u$ is $Z_1$-active and $Z_2$-active,
we must have $|b| < |Z_1|$ and $|c| < |Z_2|$. Moreover, since $Z_1$ is a
piece and $X_3$ is a maximal piece prefix of the relation word
$X_3 Y_3 Z_3$ we must have $|Z_1| \leq |b X_3|$, and similarly
$|Z_2| \leq |c X_4|$.

It follows that $u$ has prefixes $X_3' Y_3$ and $X_4' Y_4$ where
$X_3'$ and $X_4'$ are proper (perhaps empty) suffixes of $X_3$ and
$X_4$ respectively. Thus, one of $X_3' Y_3$ and $X_4' Y_4$ is a prefix
of the other, and so either $Y_3$ is a factor of
$X_4' Y_4$ and hence of $X_4 Y_4 Z_4$ or $Y_4$ is a factor of
$X_3' Y_3$ and hence of $X_3 Y_3 Z_3$. But by the $C(4)$ assumption, neither
$Y_3$ nor $Y_4$ is a piece so the only possible explanation is that
$X_3 Y_3 Z_3$ and $X_4 Y_4 Z_4$ are the same relation word, and
moreover $X_3' = X_4'$.

Now let $p$ be such that $p X_3' = X_3$. We have already observed that
$X_3'$ is a proper prefix of $X_3$, so $p$ is non-empty. Now
$Z_1 = bp$, and also $$p X_4' = p X_3' = X_3 = X_4$$
so by symmetry we have $Z_2 = cp$. Hence, $p$ is a common non-empty suffix of 
$Z_1$ and $Z_2$.

Now let $z$ be the maximal common suffix of $Z_1$ and $Z_2$. Let $y$, $z_1$ and
$z_2$ be such that $z = yp$, $Z_1 = z_1 z$ and $Z_2 = z_2 z$. Then clearly
$b = z_1 y$ and $c = z_2 y$. Now $zu = ypu$ has a relation prefix $y X_3 Y_3$,
from which it is immediate that $u$ is $z$-active so that (i) holds.

To show that (ii) holds, let $u'$ be such that $u = X_3' Y_3 u'$, and
suppose $u \equiv v$. Now
$$Z_1 u \ = \ z_1 z X_3' Y_3 u' \ = \ z_1 y p X_3' Y_3 u' \ = \ z_1 y X_3 Y_3 u'$$
where $z_1 y X_3 Y_3$ is the shortest relation prefix, and hence is an
overlap prefix. Hence, by Proposition~\ref{prop_dumpprefix} we have
$v = z_1 y v''$ where $v'' \equiv X_3 Y_3 u'$. But now setting
$v' = yv''$ we have $v = z_1 v'$, $z_1 z = Z_1$ and
$$v' \ = \ yv'' \ \equiv \ y X_3 Y_3 u' \ = \ y p X_3' Y_3 u' \ = \ z X_3' Y_3 u' \ = \ zu$$
as required. Conversely, if $v = z_1 v'$ where $z_1 z = Z_1$ and $v' \equiv zu$
then we have
$$Z_1 u \ = \ z_1 z u \ \equiv \ z_1 v' \ = \ v.$$
This completes the proof that (ii) holds, and an entirely symmetric argument shows that (iii) holds.
\end{proof}

\begin{corollary}\label{cor_actsame}
Let $Z_1$ and $Z_2$ be
maximal piece suffixes of relation words.
Suppose $u$ is $Z_2$-active and $Z_1 u \equiv Z_1 v$. Then
$Z_2 u \equiv Z_2 v$.
\end{corollary}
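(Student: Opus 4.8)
The plan is to split into two cases according to whether $u$ is $Z_1$-active or $Z_1$-inactive; since $Z_1$ is a piece this dichotomy is exhaustive, and the hypothesis that $u$ is $Z_2$-active already guarantees that $u$ possesses a shortest relation prefix, so the active/inactive terminology is meaningful in both cases.

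In the inactive case I would argue directly. If $u$ is $Z_1$-inactive, then Proposition~\ref{prop_inactive} (applied with $p = Z_1$) tells us that $Z_1 u \equiv w$ forces $w = Z_1 w'$ with $u \equiv w'$. Feeding in the hypothesis $Z_1 u \equiv Z_1 v$ and taking $w = Z_1 v$, free-monoid left cancellation of $Z_1$ yields $w' = v$ and hence $u \equiv v$. Since $\equiv$ is a congruence, prepending $Z_2$ immediately gives $Z_2 u \equiv Z_2 v$, which is in fact stronger than what is required.

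The substantive case is when $u$ is $Z_1$-active. Then $u$ is active with respect to both $Z_1$ and $Z_2$, so Proposition~\ref{prop_coactive} applies: there is a maximal common suffix $z$ of $Z_1$ and $Z_2$, with $Z_1 = z_1 z$ and $Z_2 = z_2 z$, such that parts (ii) and (iii) hold. The idea is to route everything through the ``common core'' $zu$. Applying part (ii) to the target word $Z_1 v = z_1 (z v)$, the hypothesis $Z_1 u \equiv Z_1 v$ translates into the single statement $zv \equiv zu$. Applying part (iii) to the target word $Z_2 v = z_2 (z v)$, this same relation $zv \equiv zu$ is exactly what is needed to conclude $Z_2 u \equiv Z_2 v$.

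The main obstacle is really just bookkeeping: one must apply the biconditionals of Proposition~\ref{prop_coactive} in the correct direction and correctly match the target words $Z_1 v$ and $Z_2 v$ against the prescribed forms $z_1 v'$ and $z_2 v'$, using free-monoid cancellation to identify $v'$ with $zv$ in each case. No genuinely new combinatorial input beyond the two cited propositions is expected; the content of the corollary is that activity forces $Z_1 u \equiv Z_1 v$ and $Z_2 u \equiv Z_2 v$ to be equivalent reformulations of the symmetric condition $zu \equiv zv$.
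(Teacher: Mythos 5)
Your proposal is correct and follows essentially the same route as the paper: the same case split on whether $u$ is $Z_1$-inactive (handled via Proposition~\ref{prop_inactive} and the congruence property) or $Z_1$-active (handled via Proposition~\ref{prop_coactive}, passing through the common word $zu \equiv zv$). The only cosmetic difference is that you invoke part~(iii) of Proposition~\ref{prop_coactive} for the final step where the paper simply multiplies $v' \equiv zu$ on the left by $z_2$, but these amount to the same thing.
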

\begin{proof}
If $u$ is $Z_1$-inactive then by Proposition~\ref{prop_inactive} we have
$u \equiv v$, and so certainly $Z_2 u \equiv Z_2 v$.

On the other hand, if $u$ is $Z_1$-active then let $z$ be the maximal
common suffix of $Z_1$ and $Z_2$ and let $z_1$ and $z_2$ be such that
$z_1 z = Z_1$ and $z_2 z = Z_2$. Then by the Proposition~\ref{prop_coactive}(ii),
since
$Z_1 u \equiv Z_1 v$ we have $Z_1 v = z_1 v'$ where $v' \equiv zu$. But
from $z_1 z v = Z_1 v = z_1 v'$ we deduce that $v' = zv$, so now we have
$$Z_2 u \ = \ z_2 z u \ \equiv \ z_2 v' \  = \ z_2 z v \ = \  Z_2 v.$$
\end{proof}

\begin{corollary}\label{cor_eitheror}
Let $u$ and $v$ be words and $Z_1$ and $Z_2$ be maximal piece suffixes of
relation words. Suppose there exist words $u = u_1, \dots, u_n = v$ such that
\begin{align*}
Z_1 u_1 \equiv Z_1 u_2, \ Z_2 u_2 \equiv Z_2 u_3, \ &Z_1 u_3 \equiv Z_1 u_4, \ \dots \\
&\dots , \ 
\begin{cases}
Z_1 u_{n-1} \equiv Z_1 u_n &\text{ if $n$ is even} \\
Z_2 u_{n-1} \equiv Z_2 u_n &\text{ if $n$ is odd}.
\end{cases}
\end{align*}
Then either $Z_1 u \equiv Z_1 v$ or $Z_1 u \equiv Z_2 v$ or both.
\end{corollary}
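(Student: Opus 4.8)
The plan is to prove the statement by induction on $n$, maintaining throughout the scan the invariant that, for the initial segment $u_1,\dots,u_k$ of the chain, one has either $Z_1 u_1 \equiv Z_1 u_k$ or $Z_1 u_1 \equiv Z_2 u_k$. The base cases $n=1$ and $n=2$ are immediate: for $n=1$ we have $u=v$, and for $n=2$ the first hypothesis is exactly $Z_1 u_1 \equiv Z_1 u_2$. For the inductive step I would take the invariant at $u_k$, supplied by the inductive hypothesis applied to $u_1,\dots,u_k$, and then incorporate the relation joining $u_k$ to $u_{k+1}$ --- which uses $Z_1$ when $k$ is odd and $Z_2$ when $k$ is even --- so as to re-establish the invariant at $u_{k+1}$.

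The engine of the inductive step is Corollary~\ref{cor_actsame}, which lets me transfer an equivalence of the form $Z_i w \equiv Z_i w'$ to $Z_j w \equiv Z_j w'$ provided the left-hand base word $w$ is $Z_j$-active. Thus, whenever the invariant currently reads $Z_1 u_1 \equiv Z_2 u_k$ but the incoming relation is phrased in terms of $Z_1$ (or the symmetric situation arises), I would attempt to convert the incoming relation to match, applying Corollary~\ref{cor_actsame} with pivot $u_k$. When $u_k$ is active with respect to the suffix I need, this conversion succeeds and the invariant propagates directly. When instead $u_k$ is inactive with respect to the relevant suffix, Proposition~\ref{prop_inactive} applies: cancelling that suffix turns the relevant equivalence into a literal prefix equality of words and, in the favourable cases, collapses the step to a base-word equivalence $u_1 \equiv u_k$ or $u_k \equiv u_{k+1}$, from which the invariant is re-established trivially.

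The hard part will be the mixed case, in which the pivot word $u_k$ is active with respect to one of $Z_1, Z_2$ but inactive with respect to the other, precisely when the invariant and the incoming relation disagree on which suffix they use. Here neither Corollary~\ref{cor_actsame} nor a clean cancellation applies immediately. My intended route is to extract, from the inactive side via Proposition~\ref{prop_inactive}, the literal word equality $Z_1 u_1 = Z_2 w$ with $u_k \equiv w$, and then to feed $w$ back through Corollary~\ref{cor_actsame}: combining $u_k \equiv w$ with the incoming relation yields $Z_1 w \equiv Z_1 u_{k+1}$, and determining the activity of the \emph{literal} word $w$ (using the common-suffix analysis of Proposition~\ref{prop_coactive}, together with the fact that $Z_2$ is now a literal prefix of $Z_1 u_1$) should let me promote this to $Z_2 w \equiv Z_2 u_{k+1}$, and hence to $Z_1 u_1 \equiv Z_2 u_{k+1}$. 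I expect this mixed case to demand the bulk of the work and the most delicate bookkeeping, since the activity of $w$ does not follow from that of $u_k$ but must be recovered from the word-level structure; the remaining cases are routine chases through Corollary~\ref{cor_actsame} and Proposition~\ref{prop_inactive}.
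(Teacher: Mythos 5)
Your overall strategy (a forward induction on the chain, driven by Proposition~\ref{prop_inactive} and Corollary~\ref{cor_actsame}) is the right family of argument --- the paper does the same thing phrased as a minimal-counterexample --- but the invariant you chose does not propagate, and the point where you assert it is ``re-established trivially'' is exactly where it breaks. Consider the mixed case where the invariant reads $Z_1 u_1 \equiv Z_1 u_k$, the incoming link is $Z_2 u_k \equiv Z_2 u_{k+1}$, and $u_k$ is $Z_2$-active but $Z_1$-inactive. Your conversion via Corollary~\ref{cor_actsame} would need $u_k$ to be $Z_1$-active, so it is unavailable; Proposition~\ref{prop_inactive}, applied to the invariant, yields only $u_1 \equiv u_k$ (here the ``literal prefix equality'' is the vacuous $Z_1 u_1 = Z_1 w$ with $w = u_1$; the shape $Z_1 u_1 = Z_2 w$ that your hard-case manoeuvre exploits never arises on this side). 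Either way, all you can conclude is $Z_2 u_1 \equiv Z_2 u_{k+1}$, which is neither $Z_1 u_1 \equiv Z_1 u_{k+1}$ nor $Z_1 u_1 \equiv Z_2 u_{k+1}$, and none of the paper's tools converts it into either: that would require $Z_1$-activity of $u_1$ or $u_{k+1}$, which follows from nothing in your hypotheses (activity is a property of the literal word and is not preserved under $\equiv$). The root cause is that you built the invariant out of the corollary's stated conclusion, and that conclusion contains a misprint: what the paper's own proof establishes, and what is actually invoked later (proof of Lemma~\ref{lemma_eq}, Cases (1), (2) and (6)), is the symmetric disjunction ``$Z_1 u \equiv Z_1 v$ or $Z_2 u \equiv Z_2 v$''. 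The cross-term $Z_1 u \equiv Z_2 v$ is not what can be proved, and chasing it is what makes your ``hard mixed case'' (which you leave unresolved, since the $Z_2$-activity of $w$ is never established) intractable.

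With the symmetric invariant --- for each $k$, either $Z_1 u_1 \equiv Z_1 u_k$ or $Z_2 u_1 \equiv Z_2 u_k$ --- your own two tools close the induction immediately, with no auxiliary words $w$ and no appeal to Proposition~\ref{prop_coactive}: if the incoming link carries the same suffix as the current disjunct, use transitivity; otherwise, either $u_k$ is inactive for the incoming suffix, in which case Proposition~\ref{prop_inactive} collapses that link to $u_k \equiv u_{k+1}$ and the current disjunct carries over, or $u_k$ is active for the incoming suffix, in which case Corollary~\ref{cor_actsame} (pivot $u_k$) transfers the current disjunct to that suffix and transitivity finishes. This corrected induction is the paper's proof turned inside out: the paper takes a chain of minimal length $n$, allowing $Z_1$ and $Z_2$ to be exchanged (which is precisely the two-disjunct invariant in disguise), and shows that any chain with $n > 2$ can be shortened at the pivot $u_2$ by the same trichotomy --- a $Z_1$- or $Z_2$-inactive pivot collapses a link via Proposition~\ref{prop_inactive}, and a doubly active pivot lets Corollary~\ref{cor_actsame} merge the first two links.
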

\begin{proof}
Fix $u$ and $v$, and suppose $n$ is minimal (allowing exchanging $Z_1$
and $Z_2$ if necessary) such that a sequence of equivalences as above
exists. Suppose further for a contradiction that $n > 2$. If $u_2$ was
$Z_1$-inactive then by
Proposition~\ref{prop_inactive} we would have $u_1 \equiv u_2$
so that $Z_2 u_1 \equiv Z_2 u_2 \equiv Z_2 u_3$, contradicting the
minimality assumption on $n$. Similarly, if $u_2$ was $Z_2$-inactive
then we would have $u_2 \equiv u_3$ so that $Z_1 u_1 \equiv Z_1 u_2 \equiv Z_1 u_3$
again contradicting the minimality assumption on $n$.

Thus, $u_2$ is both $Z_1$-active and $Z_2$-active.
But now since $Z_1 u_1 \equiv Z_1 u_2$, we apply
Corollary~\ref{cor_actsame} to see that
$Z_2 u_1 \equiv Z_2 u_2 \equiv Z_2 u_3$, again providing the required contradiction.
\end{proof}

\section{Sequential Characterisation of Equality}\label{sec_equality}

In this section we use the theory developed in Section~\ref{sec_weakcan}
to provide a new characterisation of when two words
in the generators of a small overlap presentation represent the same element
of the monoid presented. In Section~\ref{sec_algorithm} we shall use this 
characterisation to develop an efficient algorithm to solve the word problem.

We first present a lemma which gives a set of mutually exclusive combinatorial
conditions, the disjunction of which is necessary and sufficient for two words
of a certain form to represent the same element.

\begin{lemma}\label{lemma_eq}
Suppose $u = X Y u'$ where $XY$ is a clean overlap prefix of
$u$. Then $u \equiv v$ if and only if one of the following mutually
exclusive conditions holds:
\begin{itemize}
\item[(1)] $u = XYZ u''$ and $v = XYZ v''$ and either
$Z u'' \equiv Z v''$ or $\ol{Z} u'' \equiv \ol{Z} v''$ or both;
\item[(2)] $u = X Y u'$, $v = X Y v'$, and $Z$ fails to be a
prefix of at least one of $u'$ and $v'$, and $u' \equiv v'$;
\item[(3)] $u = X Y Z u''$, $v = \ol{X} \ol{Y} \ol{Z} v''$
and either $Z u'' \equiv Z v''$ or $\ol{Z} u'' \equiv \ol{Z} v''$
or both;
\item[(4)] $u = X Y u'$, $v = \ol{X} \ol{Y} \ol{Z} v''$ but
$Z$ is not a prefix of $u'$ and $u' \equiv Z v''$;
\item[(5)] $u = X Y Z u''$, $v = \ol{X} \ol{Y} v'$
but $\ol{Z}$ is not a prefix of $v'$ and $\ol{Z} u'' \equiv v'$;
\item[(6)] $u = X Y u'$, $v = \ol{X} \ol{Y} v'$, $Z$ is not
a prefix of $u'$ and $\ol{Z}$ is not a prefix of $v'$, but
$Z = z_1 z$, $\ol{Z} = z_2 z$, $u' = z_1 u''$, $v' = z_2 v''$ where
$u'' \equiv v''$ and $z$ is the maximal common suffix of $Z$ and $\ol{Z}$,
$z$ is non-empty, and $z$ is a possible prefix of $u''$.
\end{itemize}
\end{lemma}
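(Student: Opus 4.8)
The plan is to analyze what happens to the clean overlap prefix $XY$ of $u$ under the equivalence $u \equiv v$, using Lemma~\ref{lemma_overlap} as the central tool. By that lemma, $v$ must have overlap prefix either $XY$ or $\ol{XY}$, with no relation word overlapping it except possibly $XYZ$ or $\ol{XYZ}$. This immediately bifurcates the argument according to which prefix $v$ carries, and within each branch according to whether the relevant maximal piece suffix ($Z$ or $\ol{Z}$) genuinely extends as a prefix of the remaining word. This four-way (prefix of $v$) $\times$ (does $Z$ extend) split is what produces the six cases; I would organize the proof around it rather than verifying each numbered case in isolation.

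For the forward direction, suppose $u \equiv v$. First I would establish that the cases are mutually exclusive, which is largely syntactic: cases are distinguished by whether $v$ begins $XY$ or $\ol{XY}$ (these differ since $XYZ \neq \ol{XYZ}$ as relation words), and by whether $Z$ is a prefix of the tail and whether $\ol Z$ is. Then, assuming $v$ has prefix $XY$: if $Z$ is a prefix of both tails we are heading for case~(1), and I would strip the prefix $XY$ using Proposition~\ref{prop_dumpprefix} (since $XY$ is an overlap prefix) to reduce to comparing $Zu''$ against $Zv''$, then invoke the active/inactive dichotomy via Corollary~\ref{cor_actsame} or Proposition~\ref{prop_inactive} to get the disjunction $Zu'' \equiv Zv''$ or $\ol{Z}u'' \equiv \ol{Z}v''$. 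If instead $Z$ fails to be a prefix of $u'$ or $v'$, then no rewrite of $XYZ$ can ever have been applied at that position, so the prefix $XY$ peels off cleanly and case~(2) follows, again via Proposition~\ref{prop_dumpprefix}. When $v$ has prefix $\ol{XY}$ (cases (3)--(6)), the same bookkeeping applies but now with $Z$ governing the $u$-side and $\ol Z$ the $v$-side independently; the maximal-common-suffix machinery of Proposition~\ref{prop_coactive} is exactly what is needed to handle case~(6), where neither $Z$ nor $\ol Z$ extends but they share a common suffix $z$.

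The converse direction is the routine half: in each case one assembles $u \equiv v$ from the stated hypotheses by reinserting the prefixes and applying the relation $XYZ \equiv \ol{XYZ}$ where needed, essentially reversing the peeling steps. For instance in case~(3), from $Zu'' \equiv Zv''$ one gets $XYZu'' \equiv XYZv'' \equiv \ol{XYZ}v''$, and symmetrically for the $\ol Z$ alternative.

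The main obstacle will be case~(6) and the careful interface between the ``$Z$ is a prefix'' and ``$Z$ is not a prefix'' subcases. The delicate point is that $p$-activity depends on the entire tail, not just on whether $Z$ literally appears as a prefix, so I must verify that when neither $Z$ nor $\ol Z$ is a prefix of the respective tail, the words are nonetheless both active with respect to these suffixes, triggering Proposition~\ref{prop_coactive}; I expect this to require a short argument showing that $Z$-inactivity would force $Z$ to be a genuine prefix, contradicting the case hypothesis. Ensuring the six cases are genuinely exhaustive (that every equivalence lands in exactly one) and that the boundary conditions on $z$ being non-empty and a possible prefix of $u''$ are correctly forced, rather than merely assumed, is where the real care is needed.
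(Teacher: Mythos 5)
Your high-level case division (which of $XY$, $\ol{XY}$ begins $v$, crossed with whether the full relation words appear) matches the paper's, and your converse direction is essentially right. But the forward direction has a genuine gap, in two related places. First, Proposition~\ref{prop_dumpprefix} cannot do the work you assign it: in that proposition the overlap prefix is $aXY$ and only the part $a$ \emph{preceding} $X$ is cancelled --- the conclusion is $v = av'$ with $v' \equiv XYu''$, so with $a = \epsilon$ (which is the situation here) it says nothing at all. Nor could any such ``stripping'' of $XY$ be valid, because the prefix $XY$ is not inert: the relation $(XYZ, \ol{XYZ})$ can rewrite it whenever the tail begins with $Z$, \emph{or can be rewritten so as to begin with $Z$}. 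This also undermines your case~(2) argument: the claim that ``no rewrite of $XYZ$ can ever have been applied at that position'' because $Z$ is not a literal prefix of $u'$ is false, since $Z$ may still be a possible prefix of $u'$ (i.e.\ $u' \equiv Zw$), in which case a derivation can first rewrite $u'$ into $Zw$, then flip the prefix to $\ol{XYZ}$, and later flip it back.

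Second, and centrally, you never invoke or reconstruct Corollary~\ref{cor_eitheror}, which is the engine of the paper's proof. The paper takes an arbitrary rewriting sequence from $u$ to $v$ and uses Lemma~\ref{lemma_overlap} to break it into maximal segments that do not touch the prefix, separated by applications of $(XYZ,\ol{XYZ})$ at the front; this produces an alternating chain $Z u'' \equiv Z u_1$, $\ol{Z} u_1 \equiv \ol{Z} u_2$, $Z u_2 \equiv Z u_3, \dots$, of \emph{a priori} unbounded length. It is exactly Corollary~\ref{cor_eitheror} (itself resting on Proposition~\ref{prop_coactive} and Corollary~\ref{cor_actsame}) that collapses this chain into the two-term disjunction ``$Zu'' \equiv Zv''$ or $\ol{Z}u'' \equiv \ol{Z}v''$'' of conditions (1) and (3); the same chain, plus Proposition~\ref{prop_inactive} to establish that the intermediate words $u_1$ and $u_n$ are active, drives cases (2), (4), (5) and (6). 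Your appeal to ``the active/inactive dichotomy'' applied directly to $u''$ and $v''$ skips this decomposition entirely, and without it there is no argument that a derivation which flips the prefix many times lands in any of the six conditions. Your closing observation --- that inactivity of an intermediate word would force $Z$ to be a literal prefix, contradicting the case hypothesis --- is correct and is indeed how the paper deduces the activity needed for Proposition~\ref{prop_coactive} in case (6), but it only becomes usable once the chain of intermediate words $u_1, \dots, u_n$ has been exhibited.
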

\begin{proof}
First we treat the claim that the conditions (1)-(6) are mutually exclusive.
Since $X$ is a maximal piece prefix of $XYZ$ and $Y$ is non-empty, $XY$ is
not a piece. An entirely similar argument shows that $\ol{XY}$ is not a piece.
In particular, neither of $XY$ and $\ol{XY}$ is a prefix of the other, and
so $v$ can have at most one of them as a prefix. Thus, conditions (1)-(2)
are not consistent with conditions (3)-(6). The mutual exclusivity of (1)
and (2) is self-evident from the definitions, and likewise that of (3)-(6). 

It is easily verified that each of the conditions
(1)-(5) imply that $u \equiv v$. We show next that (6) implies that
$u \equiv v$. Since $z$ is a possible prefix of $u''$ and $u'' \equiv v''$,
we may write $u'' \equiv zx \equiv v''$ for some word $x$. Now we have
\begin{align*}
u = X Y u' = XY z_1 u'' &\equiv XY z_1 z x = XYZ x \\
&\equiv \ol{XYZ} x = \ol{XY} z_1 z x \equiv \ol{XY} z_2 v'' = \ol{XY} v' = v.
\end{align*}
What remains, which is the main burden of
the proof, is to prove that $u \equiv v$ implies that at least one of
the conditions (1)-(6) holds. To this end, then, suppose $u \equiv v$;
then there is a rewriting sequence taking $u$ to $v$. 
By Lemma~\ref{lemma_overlap}, every term in this sequence will have prefix
either $XY$ or $\ol{XY}$ and this prefix
can only be modified by the application of the relation $(XYZ, \ol{XYZ})$
in the obvious place. We now prove the claim by case analysis.

By Lemma~\ref{lemma_overlap}, $v$ begins either with $XY$ or with $\ol{XY}$.
Consider first the case in which $v$ begins with $XY$; we split this into
two further cases depending on whether $u$ and $v$ both begin with the full
relation word $XYZ$; these will correspond respectively to conditions (1)
and (2) in the statement of the lemma.

\textbf{Case (1).} Suppose $u = XYZ u''$ and $v = X Y Z v''$.
Then clearly there is a rewriting sequence taking $u$ to $v$ which by
Lemma~\ref{lemma_overlap} can be
broken up as:
\begin{align*}
u = XYZ u'' \to^* X Y Z u_1 \to &\ol{XYZ} u_1 \to^* \ol{XYZ} u_2 \\
&\to XYZ u_2 \to^* \dots \to XYZ u_n \to^* XYZ v'' = v
\end{align*}
where none of the steps in the sequences indicated by $\to^*$ involves rewriting
a relation word overlapping with the prefix $XY$ or
$\ol{XY}$ as appropriate. It follows that there are rewriting sequences.
$$Z u'' \to^* Z u_1, \ \ol{Z} u_1 \to^* \ol{Z} u_2, \ Z u_2 \to^* Z u_3, \ \dots, \ Z u_n \to^* Z v''$$
Now by Corollary~\ref{cor_eitheror}, either $Z u'' \equiv Z v''$ or $\ol{Z} u'' \equiv \ol{Z} v''$
as required to show that condition (1) holds.

\textbf{Case (2).} Suppose now that $u = X Y u'$, $v = XY v'$ and $Z$
fails to be a prefix of at least one of $u'$ and $v'$. We must show that
$u' \equiv v'$; suppose for a contradiction that this does not hold. We
consider only the case that $Z$ is not a prefix of $u'$; the case that
$Z$ is not a prefix of $v'$ is symmetric. We consider rewriting sequences
from $u = XY u'$ to $v = XY v'$. Again using Lemma~\ref{lemma_overlap}, we
see that there is either (i) such a sequence taking $u$ to $v$ containing
no rewrites of relation words overlapping the prefix $XY$, or (ii) such a
sequence taking $u$ to $v$ which can be broken up as:
\begin{align*}
u = X Y u' \to^* XYZ u_1 \to &\ol{XYZ} u_1 \to^* \ol{XYZ} u_2 \\
&\to XYZ u_2 \to^* \dots \to XYZ u_n \to^* XY v' = v
\end{align*}
where none of the intermediate words in the sequences indicated by $\to^*$
contains a relation word overlapping with the prefix $XY$ or
$\ol{XY}$ as appropriate. In case (i) there is clearly a rewrite sequence
taking $u'$ to $v'$ so that $u' \equiv v'$ as required. In case (ii), there
are rewriting sequences.
$$u' \to^* Z u_1, \ \ol{Z} u_1 \to^* \ol{Z} u_2, \ Z u_2 \to^* Z u_3, \ \dots, \ Z u_n \to^* v'.$$
Notice that, since $u'$ does not begin with $Z$, we can deduce from
Proposition~\ref{prop_inactive} that $u_1$ is $Z$-active.
By Corollary~\ref{cor_eitheror}, either $Z u_1 \equiv Z u_n$ or
$\ol{Z} u_1 \equiv \ol{Z} u_n$. In the latter case, since $u_1$ is
$Z$-active, Corollary~\ref{cor_actsame} tells us that we also have
$Z u_1 \equiv Z u_n$ in any case. But now
$$u' \equiv Z u_1 \equiv Z u_n \equiv v'$$
so condition (2) holds and we are done.

We have now shown that if $v$ begins with $XY$ then either condition (1)
or condition (2) holds. It remains to consider the case in which $v$ begins
with $\ol{XY}$, and show that one of conditions (1)-(6) must be satisfied.
We split the analysis here into four cases depending on whether $u$ begins 
with the full relation word $XYZ$, and whether $v$ begins with the full
relation word $\ol{XYZ}$; these four cases will correspond respectively to 
conditions (3)-(6) in the statement of the lemma.

\textbf{Case (3).} Suppose $u = XYZ u''$ and
$v = \ol{XYZ} v''$.
Then $u = XYZ u'' \equiv v \equiv XYZ v''$, so by the same argument as in case (1) we
have either $Zu'' \equiv Z v''$ or $\ol{Z} u'' \equiv \ol{Z} v''$ as required
to show that condition (3) holds.

\textbf{Case (4).} Suppose $u = XY u'$ and
 $v = \ol{XYZ} v''$ but $Z$ is not a prefix of $u'$. Then
$u = XY u' \equiv v \equiv XYZ v''$. Now applying the same argument as
in case (2) (with $XYZ v''$ in place of $v$ and setting $v' = Zv''$) we
have $u' \equiv v' = Z v''$ so that condition (4) holds.

\textbf{Case (5).} Suppose $u = XYZ u''$, $v = \ol{XY} v'$
but $\ol{Z}$ is not a prefix of $v'$. Then we have
$\ol{XYZ} u'' \equiv u \equiv v = \ol{XY} v'$. Now applying the same
argument as in case (1) (but with $\ol{XYZ} u''$ in place of $u$ and
setting $u' = \ol{Z} u''$) we
obtain $u' \equiv v' = \ol{Z} u''$ so that condition (5) holds.

\textbf{Case (6).} Suppose $u = XY u'$, $v = \ol{XY} v'$ and that $Z$ is not a
prefix of $u'$ and $\ol{Z}$ is not a prefix of $v'$.
It follows this time there is a rewriting sequence taking $u$ to $v$ of
the form
\begin{align*}
u = X Y u' \to^* XYZ u_1 &\to \ol{XYZ} u_1 \to^* \ol{XYZ} u_2 \to XYZ u_2 \\
&\to^* \dots \to \ol{XYZ} u_n \to^* \ol{XY} v' = v
\end{align*}
where once more none of the intermediate words in the sequences indicated by $\to^*$
contains a relation word overlapping with the prefix $XY$ or
$\ol{XY}$ as appropriate.
Now there are rewriting sequences.
$$u' \to^* Z u_1, \ol{Z} u_1 \to^* \ol{Z} u_2, Z u_2 \to^* Z u_3, \dots, Z u_{n-1} \to^* Z u_n, \ol{Z} u_n \to^* v'.$$
Notice that, since $u'$ does not begin with $Z$, we may deduce from Proposition~\ref{prop_inactive}
that $u_1$ is $Z$-active.
By Corollary~\ref{cor_eitheror}, either $Z u_1 \equiv Z u_n$ or
$\ol{Z} u_1 \equiv \ol{Z} u_n$. In the latter case, since $u_1$ is
$Z$-active, Corollary~\ref{cor_actsame} tells us that we also have
$Z u_1 \equiv Z u_n$ anyway. But now
$$u' \equiv Z u_1 \equiv Z u_n$$ where $u'$ does not begin with $Z$, and
also $v' \equiv \ol{Z} u_n$ were $v'$ does not begin with $\ol{Z}$. By
applying Proposition~\ref{prop_inactive} twice, we deduce that $u_n$ is both
$Z$-active and $\ol{Z}$-active.

Let $z$ be the maximal common suffix of $Z$ and $\ol{Z}$. Then
applying Proposition~\ref{prop_coactive} (with $Z_1 = Z$ and $Z_2 = \ol{Z}$),
we see that $z$ is non-empty and
\begin{itemize}
\item $u' = z_1 u''$ where $Z = z_1 z$ and $u'' \equiv z u_n$; and
\item $v' = z_2 v''$ where $\ol{Z} = z_2 z$ and $v'' \equiv z u_n$.
\end{itemize}
But then we have
$u'' \equiv z u_n \equiv v''$ and also $z$ is a possible prefix of
$u''$ as required to show that condition (6) holds.
\end{proof}

Lemma~\ref{lemma_eq} gives a first clue as to how one might solve the word
problem for a small overlap monoid by analysing words sequentially from left
to right. The natural strategy is as follows. First, use Proposition~\ref{prop_dumpprefix} to reduce to the case
in which the words both have clean relation prefixes of the form $XY$ or $\ol{XY}$.
Now by examining short prefixes, one can clearly always rule out at least
five of the six mutually exclusive conditions of the lemma. The remaining
condition will involve equivalence of words derived from suffixes of $u$
and $v$, so apply the same approach recursively to test whether this
condition is satisfied.

This approach meets with several apparent obstacles. Firstly, it
is not clear that the words derived from the suffixes of $u$ and $v$, which
must be tested for equivalence in the recursive call, are shorter than the
original words $u$ and $v$; for example, a relation word $XYZ$ may be shorter
than the maximal piece suffix $\ol{Z}$ of the word on the other side of the
relation. In fact the recursive call will not always involve shorter words,
but it will involve words which are simpler in a more subtle sense, so
that the algorithm still terminates rapidly. Secondly, some of the conditions
involve a disjunction of equivalence of \textit{two} pairs of words derived
from the suffixes; testing both would require two recursive calls, potentially
leading to exponential time complexity. It tranpires, though, that the
theory of activity and inactivity developed in Section~\ref{sec_weakcan}
means that one recursive call will always suffice.
Finally, some of the conditions require us to check the possible prefixes
of words derived from suffixes; this problem is solved by the following
development of Lemma~\ref{lemma_eq}, which gives simultaneous conditions for
two words to be equal, and to admit a given piece as a possible prefix.

\begin{lemma}\label{lemma_eqandprefix}
Suppose $u = X Y u'$ where $XY$ is a clean overlap prefix, and suppose
$p$ is a piece. Then $u \equiv v$ and $p$ is a possible prefix of $u$
if and only if one of the following mutually exclusive conditions holds:
\begin{itemize}
\item[(1')] $u = XYZ u''$ and $v = XYZ v''$, either
$Z u'' \equiv Z v''$ or $\ol{Z} u'' \equiv \ol{Z} v''$, and
also $p$ is a prefix of either $X$ or $\ol{X}$ or both;

\item[(2')] $u = X Y u'$, $v = X Y v'$, and $Z$ fails to be a
prefix of at least one of $u'$ and $v'$, and $u' \equiv v'$,
and also either
\begin{itemize}
\item $p$ is a prefix of $X$
\item $p$ is a prefix of $\ol{X}$ and $Z$ is a possible prefix of $u'$;
\end{itemize}
or both;

\item[(3')] $u = X Y Z u''$, $v = \ol{X} \ol{Y} \ol{Z} v''$
and either $Z u'' \equiv Z v''$ or $\ol{Z} u'' \equiv \ol{Z} v''$
or both, and also $p$ is a prefix of $X$ or $\ol{X}$ or both;

\item[(4')] $u = X Y u'$, $v = \ol{X} \ol{Y} \ol{Z} v''$ but
$Z$ is not a prefix of $u'$ and $u' \equiv Z v''$, and also
$p$ is a prefix of $X$ or $\ol{X}$ or both;

\item[(5')] $u = X Y Z u''$, $v = \ol{X} \ol{Y} v'$
but $\ol{Z}$ is not a prefix of $v'$ and $\ol{Z} u'' \equiv v'$,
and also $p$ is a prefix of $X$ or $\ol{X}$ or both;

\item[(6')] $u = X Y u'$, $v = \ol{X} \ol{Y} v'$, $Z$ is not
a prefix of $u'$ and $\ol{Z}$ is not a prefix of $v'$, but
$Z = z_1 z$, $\ol{Z} = z_2 z$, $u' = z_1 u''$, $v' = z_2 v''$ where
$u'' \equiv v''$, $z$ is the maximal common suffix of $Z$ and $\ol{Z}$,
$z$ in non-empty, $z$ is a possible prefix of $u''$, and
also $p$ is a prefix of $X$ or $\ol{X}$ or both.
\end{itemize}
\end{lemma}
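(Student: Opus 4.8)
The plan is to deduce everything from Lemma~\ref{lemma_eq} together with a single auxiliary characterisation of the possible prefixes of $u$. The central claim I would establish first is the following: a piece $p$ is a possible prefix of $u$ if and only if either $p$ is a prefix of $X$, or else $p$ is a prefix of $\ol{X}$ and $Z$ is a possible prefix of $u'$. For the necessity direction, suppose $pw \equiv u$ for some word $w$. Since $XY$ is a clean overlap prefix of $u$, Lemma~\ref{lemma_overlap} guarantees that $pw$ has overlap prefix $XY$ or $\ol{XY}$. Because $p$ is a piece and every factor of a piece is again a piece, while neither $XY$ nor $\ol{XY}$ is a piece (as observed at the start of the proof of Lemma~\ref{lemma_eq}), the prefix $p$ of $pw$ can neither equal nor properly contain $XY$ or $\ol{XY}$; hence it is a proper prefix of one of them, and maximality of the piece prefixes $X$ and $\ol{X}$ forces $p$ to be a prefix of $X$ or of $\ol{X}$ respectively. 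Moreover, in the second case $u$ is equivalent to a word beginning with $\ol{XY}$, and feeding this equivalence into Lemma~\ref{lemma_eq} places us in one of its cases (3)--(6); in each of these one reads off that $Z$ is a possible prefix of $u'$ (literally in (3) and (5), where $u = XYZu''$; via $u' \equiv Z v''$ in (4); and in (6) from $u' = z_1 u'' \equiv z_1 z x = Zx$, using that $z$ is a possible prefix of $u''$). For sufficiency, if $p$ is a prefix of $X$ then $p$ is already an actual prefix of $u$; and if $p$ is a prefix of $\ol{X}$ with $u' \equiv Zx$, then $u = XY u' \equiv XYZx \equiv \ol{XYZ}x$, a word with prefix $p$.

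With this claim in hand, the lemma follows by overlaying it on Lemma~\ref{lemma_eq}. Mutual exclusivity of (1')--(6') is inherited, since each primed condition is the corresponding condition of Lemma~\ref{lemma_eq} augmented by a requirement on $p$, and those underlying conditions are already mutually exclusive. For the stated equivalence, suppose first that one of (1')--(6') holds: the unprimed part yields $u \equiv v$ by Lemma~\ref{lemma_eq}, and the appended requirement on $p$ is in each case exactly a specialisation of the criterion of the claim, so $p$ is a possible prefix of $u$. Conversely, if $u \equiv v$ and $p$ is a possible prefix of $u$, then Lemma~\ref{lemma_eq} puts us in one of its cases (1)--(6), and the claim describes precisely which $p$ are then possible prefixes. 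The decisive bookkeeping point is that in cases (1),(3),(4),(5),(6) the hypotheses already force $Z$ to be a possible prefix of $u'$ --- literally in (1),(3),(5) where $u = XYZu''$, through $u' \equiv Z v''$ in (4), and through $u' = z_1 u'' \equiv Zx$ in (6) --- so the two-part criterion of the claim collapses to ``$p$ is a prefix of $X$ or of $\ol{X}$'', giving exactly the forms (1'),(3'),(4'),(5'),(6'). Only in case (2) is no such guarantee available, and there the full two-part criterion is retained, yielding precisely (2').

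The main obstacle is the necessity direction of the claim, specifically the assertion that if $u$ is equivalent to any word beginning with $\ol{XY}$ then $Z$ must be a possible prefix of $u'$. This is where the genuine content lies; it is discharged by applying Lemma~\ref{lemma_eq} to that equivalence and inspecting its cases (3)--(6), the only non-transparent instance being (6), where one must combine $u' = z_1 u''$ with the fact that the maximal common suffix $z$ of $Z$ and $\ol{Z}$ is a possible prefix of $u''$ in order to conclude $u' \equiv Zx$. Everything else reduces to routine checking against Lemma~\ref{lemma_eq} and the elementary facts that factors of pieces are pieces and that $XY$ and $\ol{XY}$ are not pieces.
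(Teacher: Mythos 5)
Your proof is correct, and while it rests on the same two pillars as the paper's --- Lemma~\ref{lemma_eq} together with the piece-maximality argument showing that any possible piece prefix of $u$ must be a prefix of $X$ or of $\ol{X}$ --- it handles the crux step by a genuinely different argument. The crux is showing that when $u$ is equivalent to a word beginning with $\ol{XY}$ (equivalently, when $p$ is a prefix of $\ol{X}$ but the prefix $XY$ of $u$ must be destroyed to realise $p$), the word $Z$ is forced to be a possible prefix of $u'$; this is exactly the extra clause distinguishing (2') from (2). The paper proves this by returning to the rewriting machinery: by Lemma~\ref{lemma_overlap} the prefix $XY$ can only be altered by an application of the relation $(XYZ,\ol{XYZ})$, so one takes a shortest rewrite sequence carrying $u=XYu'$ to a word of the form $XYZw$, observes that no term before the last contains a relation word overlapping the prefix $XY$, and concludes that the same rewrites carry $u'$ to $Zw$. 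You instead bootstrap from Lemma~\ref{lemma_eq} itself, applied to the pair $(u,pw)$: since $pw$ begins with $\ol{XY}$ and not $XY$, only its cases (3)--(6) can hold, and in each of them $Z$ is read off as a possible prefix of $u'$ (literally in (3) and (5), via $u'\equiv Zv''$ in (4), and via $u'=z_1u''\equiv z_1zx=Zx$ in (6)). Your route stays entirely inside the already-established equivalence characterisation and never touches rewrite sequences again, at the cost of a four-way case inspection; the paper's is shorter but re-enters lower-level machinery. A secondary difference: for sufficiency in cases (3')--(6') the paper uses the quick observation that $u$ begins with $X$ and $v$ with $\ol{X}$, so $p$ is an actual prefix of $u$ or of $v$ and hence a possible prefix of $u$; you route everything through your standalone characterisation of possible piece prefixes, which requires the same bookkeeping (that $Z$ is automatically a possible prefix of $u'$ in those cases) that you use in the converse direction. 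Both are valid; a modest advantage of your organisation is that the possible-prefix characterisation is isolated as a reusable statement, which the paper never does explicitly.
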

\begin{proof}
Mutual exclusivity of the six conditions is proved exactly as for
Lemma~\ref{lemma_eq}.

Suppose now that one of the six conditions above applies. Each condition
clearly implies the corresponding condition from Lemma~\ref{lemma_eq},
so we deduce immediately that $u \equiv v$. We must show, using the fact
that $p$ is a prefix of $X$ or of $\ol{X}$, that $p$ is a possible prefix
of $u$, or equivalently of $v$.

In case (1'), if $p$ is a prefix of $X$ then it is a prefix of $u$, while
if $p$ is a prefix of $\ol{X}$ then it is a prefix of $\ol{XYZ} u''$ which
is clearly equivalent to $u$.  In case (2'), if $p$ is a prefix of $X$ then
it is again a prefix of $u$, while if $p$ is a prefix of $\ol{X}$ and $Z$ is a
possible prefix of $u'$, say $u' \equiv Z w$, then
$$u \ = \ XYu' \ \equiv \ XYZw \ \equiv \ \ol{XYZ} w$$
where the latter has $p$ as a prefix. In the remaining cases $u$ begins
with $X$ and $v$ begins with $\ol{X}$, so $p$ is a prefix of either $u$
or $v$, and hence a possible prefix of $u$.

Conversely, suppose $u \equiv v$ and $p$ is a possible prefix of $u$. Then
exactly one of the six conditions in Lemma~\ref{lemma_eq} applies. By
Lemma~\ref{lemma_overlap}, every word equivalent to $u$ begins with either
$XY$ or $\ol{XY}$. Since $p$ is a piece, $X$ is the maximal piece prefix
of $XYZ$, and $\ol{X}$ is the maximal piece prefix of $\ol{XYZ}$ it follows
that $p$ is a prefix of either $X$ or $\ol{X}$. If any but condition (2)
of Lemma~\ref{lemma_overlap} is satisfied, this suffices to show
that the corresponding condition from the statement of
Lemma~\ref{lemma_eqandprefix} holds.

If condition (2) from Lemma~\ref{lemma_eq} applies, we must show
additionally that either $p$ is a prefix of $X$, or $p$ is a prefix
of $\ol{X}$ and $Z$ is a possible prefix of $u'$. Suppose $p$ is not
a prefix of $X$. Then by the above, $p$ is a prefix of $\ol{X}$. It follows from Lemma~\ref{lemma_overlap}, that the
only way the prefix $XY$ of the word $u$ can be changed using the defining
relations is by application of
the relation $(XYZ, \ol{XYZ})$. In order for this to happen, one must
clearly be able to rewrite $u = XYu'$ to a word of the form $XYZ w$;
consider the shortest possible rewriting sequence which achieves this.
By Lemma~\ref{lemma_overlap}, no term in the sequence except for the last
term will contain a relation word overlapping the initial $XY$. It follows
that the same rewriting steps rewrite $u'$ to $Zw$, so that $Z$ is a
possible prefix of $u'$, as required.
\end{proof}

\section{The Algorithm}\label{sec_algorithm}

In this section we present an algorithm, for a fixed monoid presentation
satisfying $C(4)$, which takes as input arbitrary words
$u$ and $v$ and a piece $p$, and decides whether $u \equiv v$ and $p$ is a
possible prefix of $u$. It will transpire that this algorithm can be
implemented to run time in linear in the shorter of $u$ and $v$. In particular,
by setting $p = \epsilon$ we obtain an algorithm to solve the word problem in
time linear in the smaller of the input words.
The algorithm is shown (in recursive/functional pseudocode) in
Figure~1. Our first objective is to prove
the correctness of the algorithm, that is, that whenever the algorithm
terminates, it provides the output it gives is correct.
\begin{figure}
\begin{codebox}
\Procname{$\proc{WP-Prefix}(u, v, p)$}
\li     \If $u = \epsilon$ or $v = \epsilon$ \label{li_start_a}
\li         \Then \If $u = \epsilon$ and $v = \epsilon$ and $p = \epsilon$
\li             \Then \Return \const{Yes}                \label{li_allepsilon}
\li             \Else \Return \const{No}                 \label{li_someepsilon}
            \End \label{li_end_a}
\li     \ElseIf $u$ does not have the form $XYu'$ with $XY$ a clean overlap prefix
\li     \Then \If $u$ and $v$ begin with different letters \label{li_start_b}
\li         \Then \Return \const{No}                     \label{li_uvdifferentstart}
\li        \ElseIf $p \neq \epsilon$ and $u$ and $p$ begin with
different letters
\li         \Then \Return \const{No}                     \label{li_updifferentstart}
\li         \ElseNoIf
\li       $u \gets u$ with first letter deleted
\li      $v \gets v$ with first letter deleted
\li      \If $p \neq \epsilon$
\li          \Then $p \gets p$ with first letter deleted
         \End
\li      \Return $\proc{WP-Prefix}(u,v,p)$   \label{li_rec_nomop}
\End \label{li_end_b}

\li \ElseNoIf
\li $\kw{let}\  X, Y, u'$ be such that $u = XY u'$ \label{li_start_c}

\li \If $p$ is a prefix of neither $X$ nor $\ol{X}$
\li \Then \Return \const{No} \label{li_pnotprefix}

\li \ElseIf $v$ does not begin either with $XY$ or with $\ol{XY}$
\li \Then \Return \const{No} \label{li_vstartswrong}

\li \ElseIf $u = XYZ u''$ and $v = XYZ v''$
\li    \Then \If $u''$ is $\ol{Z}$-active
\li       \Then \Return $\proc{WP-Prefix}(\ol{Z} u'', \ol{Z} v'', \epsilon)$ \label{li_rec_case1a}
\li       \Else \Return $\proc{WP-Prefix}(Z u'', Z v'', \epsilon)$ \label{li_rec_case1b}
       \End

\li \ElseIf $u = XY u'$ and $v = XY v'$
\li     \Then \If $p$ is a prefix of $X$
\li         \Then \Return $\proc{WP-Prefix}(u',v', \epsilon)$ \label{li_rec_case2a}
\li         \Else \Return $\proc{WP-Prefix}(u',v', Z)$ \label{li_rec_case2b}
        \End

\li \ElseIf $u = XYZ u''$ and $v = \ol{XYZ} v''$
\li     \Then \If $u''$ is $\ol{Z}$-active
\li         \Then \Return $\proc{WP-Prefix}(\ol{Z} u', \ol{Z} v', \epsilon)$ \label{li_rec_case3a}
\li         \Else \Return $\proc{WP-Prefix}(Z u', Z v', \epsilon)$ \label{li_rec_case3b}
        \End

\li \ElseIf $u = XY u'$ and $v = \ol{XYZ} v''$
\li     \Then \Return $\proc{WP-Prefix}(u', Z v'', \epsilon)$ \label{li_rec_case4}

\li \ElseIf $u = XYZ u''$ and $v = \ol{XY} v'$
\li     \Then \Return $\proc{WP-Prefix}(\ol{Z} u'', v', \epsilon)$ \label{li_rec_case5}

\li \ElseIf $u = XY u'$ and $v = \ol{XY} v'$
\li     \Then \kw{let} $z$ be the maximal common suffix of $Z$ and $\ol{Z}$
\li           \kw{let} $z_1$ be such that $Z = z_1 z$
\li           \kw{let} $z_2$ be such that $\ol{Z} = z_2 z$
\li           \If $u'$ does not begin with $z_1$ or $v'$ does not begin with $z_2$;
\li               \Then \Return \const{NO} \label{li_case6no}
\li               \Else \kw{let} $u''$ be such that $u' := z_1 u''$
\li                     \kw{let} $v''$ be such that $v' := z_2 v''$;
\li                     \Return $\proc{WP-Prefix}(u'', v'', z)$ \label{li_rec_case6} \label{li_end_c}
              \End
        \End
    \End
\end{codebox}
\caption{Algorithm for the Word Problem} \label{fig_algorithm}

\end{figure}

\begin{lemma}\label{lemma_correctness}
Suppose $u$ and $v$ are words and $p$ a piece. Then the algorithm
$\proc{WP-PREFIX}(u,v,p)$
\begin{itemize}
\item outputs $\const{YES}$ only if $u \equiv v$ and $p$ is a possible prefix of
$u$; and
\item outputs $\const{NO}$ only if $u \neq v$ or $p$ is not a possible prefix of $u$.
\end{itemize}
\end{lemma}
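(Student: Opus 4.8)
The plan is to prove partial correctness by induction on the depth of recursion, i.e.\ on the number of nested calls to $\proc{WP-Prefix}$ made during a terminating computation of $\proc{WP-Prefix}(u,v,p)$; this is convenient because the recursion is \emph{linear}, each invocation either returning an answer outright or returning the value of a single recursive call. The base cases are the branches that return without recursing: the empty-word handling (lines~\ref{li_allepsilon}--\ref{li_someepsilon}) and the \const{No}-returns at lines~\ref{li_uvdifferentstart}, \ref{li_updifferentstart}, \ref{li_pnotprefix}, \ref{li_vstartswrong} and~\ref{li_case6no}. The empty-word case is immediate from the consequence of $C(1)$ that no non-empty word is equivalent to $\epsilon$, so that \const{Yes} is returned exactly when $u=v=p=\epsilon$. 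Each \const{No}-return I would justify by a structural result already proved: line~\ref{li_vstartswrong} by Lemma~\ref{lemma_overlap} (any $v\equiv u$ begins with $XY$ or $\ol{XY}$); line~\ref{li_pnotprefix} by the observation from the proof of Lemma~\ref{lemma_eqandprefix} that a piece which is a possible prefix must be a prefix of $X$ or of $\ol X$; line~\ref{li_case6no} by Lemma~\ref{lemma_eqandprefix}, since the configuration reached there admits only condition (6'), whose decompositions $u'=z_1u''$, $v'=z_2v''$ are exactly what fail; and the two letter-matching \const{No}-returns by the same reasoning as for the strip-a-letter branch below.

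For the inductive step I would consider each branch issuing a recursive call, assume by the inductive hypothesis that this call returns the correct answer for its arguments, and show that this forces the returned answer for $(u,v,p)$ to be correct; concretely, for each branch one must exhibit an equivalence between ``the answer for $(u,v,p)$ is \const{Yes}'' and ``the answer for the recursive arguments is \const{Yes}''. The first branch of substance is the strip-a-letter recursion (line~\ref{li_rec_nomop}), entered when $u$ has no clean overlap prefix of the form $XY$. Here I would split on whether $u$ has a clean overlap prefix at all. If not, Corollary~\ref{cor_nomopnorel} gives $u\equiv v \Leftrightarrow u=v$ and makes possible prefixes literal prefixes, so comparing leading letters and recursing on the truncations is plainly sound. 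If $u$ does have a clean overlap prefix, the branch condition forces its initial segment $a$ to be non-empty, with first letter equal to that of $u$; I would then apply Proposition~\ref{prop_dumpprefix} both to the overlap prefix of $u$ and to the corresponding overlap prefix of the truncated word to obtain the required equivalence, and a parallel application (tracking the piece $p$) to reconcile the possible-prefix bookkeeping. The same two facts justify the \const{No}-returns at lines~\ref{li_uvdifferentstart} and~\ref{li_updifferentstart}.

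The remaining branches are the six main cases (lines~\ref{li_rec_case1a}--\ref{li_rec_case6}), which match conditions (1')--(6') of Lemma~\ref{lemma_eqandprefix}. For each I would check two things. First, the side conditions of the relevant condition that are not tested explicitly --- such as ``$Z$ fails to be a prefix of at least one of $u'$ and $v'$'' in (2'), or ``$Z$ is not a prefix of $u'$'' in (4') and (6') --- hold automatically, because reaching the branch means the earlier conditional tests failed; I would verify each such implication by reading off which guards were not met (together with the check at line~\ref{li_vstartswrong} forcing $v$ to begin with $XY$ or $\ol{XY}$). Granting these, the recursive call tests precisely the residual equivalence, and via its third argument the residual possible-prefix requirement, of that condition, so the inductive hypothesis gives correctness. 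The one genuinely delicate point, which I expect to be the main obstacle, is that conditions (1') and (3') require the \emph{disjunction} $Zu''\equiv Zv''$ or $\ol Zu''\equiv\ol Zv''$, while the algorithm makes only a single recursive call. Here I would invoke the activity dichotomy: if $u''$ is $\ol Z$-active the call tests $\ol Zu''\equiv\ol Zv''$, and Corollary~\ref{cor_actsame} shows $Zu''\equiv Zv''$ would force $\ol Zu''\equiv\ol Zv''$ in any case; if $u''$ is $\ol Z$-inactive the call tests $Zu''\equiv Zv''$, and Proposition~\ref{prop_inactive} shows $\ol Zu''\equiv\ol Zv''$ would force $u''\equiv v''$ and hence $Zu''\equiv Zv''$. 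Either way the single test captures the full disjunction, and assembling these observations across all branches completes the induction.
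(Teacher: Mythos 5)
Your proposal is correct and follows essentially the same route as the paper's own proof: induction on the number of recursive calls, a line-by-line case analysis of the terminating and recursing branches justified by Lemma~\ref{lemma_eqandprefix}, Proposition~\ref{prop_dumpprefix} and Corollary~\ref{cor_nomopnorel}, with the activity dichotomy (Corollary~\ref{cor_actsame} in the $\ol{Z}$-active case, Proposition~\ref{prop_inactive} in the inactive case) showing that a single recursive call captures the disjunctions in conditions (1') and (3'). If anything, your treatment of the strip-a-letter branch at line~\ref{li_rec_nomop} is slightly more careful than the paper's, which cites Proposition~\ref{prop_dumpprefix} alone without separating out the case where $u$ has no clean overlap prefix at all.
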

\begin{proof}
We prove correctness using induction on the number $n$ of recursive calls.

Consider first the base case $n = 0$, that is, where the algorithm terminates
without a recursive call. Suppose $u$, $v$ and $p$ are such that
this happens. We consider each of the possible lines at which termination
may occur, establishing in each case that the output produced is correct.

\begin{itemize}
\item[\textbf{Line \ref{li_allepsilon}.}] If $u = \epsilon$, $v = \epsilon$ and $p = \epsilon$
then clearly $u \equiv v$ and $p$ is a possible prefix of $u$, so the output
$\const{YES}$ is correct.

\item[\textbf{Line \ref{li_someepsilon}.}] If $u = \epsilon$ [respectively, $v = \epsilon$]
then it follows
easily from the small overlap condition $C(4)$ that no relations can be applied to
$u$ [$v$]; indeed a relation which could be applied to $u$ [$v$] would have to
have $\epsilon$ as one side, but $\epsilon$ is a piece and hence cannot be a
relation word. Hence, we can have that $u \equiv v$ and $p$ is a possible
prefix of $u$ only if $u = v = p = \epsilon$. In this case, this condition
is not satisfied, so the output $\const{NO}$ is correct.

\item[\textbf{Line \ref{li_uvdifferentstart}.}] In this case, $u$ does not begin with a clean overlap prefix of
the form $XY$. So by Proposition~\ref{prop_dumpprefix}, every word equivalent to $u$
must begin with the same letter as $u$. Hence, if $u$ and $v$ do not begin
with the same letter then we cannot have $u \equiv v$,
so the output $\const{NO}$ is correct.

\item[\textbf{Line \ref{li_updifferentstart}.}] Again, $u$ does not begin with a clean overlap prefix. If $p$ is
non-empty and begins with a different letter to $u$, then again by
Proposition~\ref{prop_dumpprefix}, $p$ cannot be a possible prefix of $u$,
so the output $\const{NO}$ is correct.

\item[\textbf{Line \ref{li_pnotprefix}.}] We are now in the case that
$u$ has a clean overlap prefix $XY$. If $p$ is not a prefix of $X$ or
$\ol{X}$ then by Lemma~\ref{lemma_eqandprefix} we see that $p$ is not a
possible prefix of $u$, so the output $\const{NO}$ is correct.

\item[\textbf{Line \ref{li_vstartswrong}.}] Once again, we are in the
case that $u$ has a clean overlap prefix $XY$. If $v$ does not begin
with either XY or $\ol{XY}$ then by Lemma~\ref{lemma_eq} we cannot have
$u \equiv v$ so the output $\const{NO}$ is correct.

\item[\textbf{Line \ref{li_case6no}.}] We are now in the case that
$u = XYu'$ and $v = \ol{XY} v'$ where $Z$ is not a prefix of $u'$
and $\ol{Z}$ is not a prefix of $v'$. We know also that $z$ is the
maximal common suffix of $Z$ and $\ol{Z}$ and $z_1$ and $z_2$ are such
that $Z = z_1 z$ and $\ol{Z} = z_2 z$. By Lemma~\ref{lemma_eqandprefix}
we cannot have $u \equiv v$ unless $u'$ and $v'$ have the form $z_1 u''$
and $z_2 v''$ respectively, so if this is not the case, the output
$\const{NO}$ is correct.
\end{itemize}

Now let $n > 0$ and suppose for induction that the algorithm produces the
correct output whenever it terminates after strictly fewer than $n$ recursive
calls. Let $u, v, p$ be such that the algorithm terminates after $n$
recursive calls. This time, we consider each of the possible places at which
the first recursive call can be made, establishing in each case that the output
produced is correct.
\begin{itemize}
\item[\textbf{Line \ref{li_rec_nomop}.}] In this case $u$ does not begin with a clean overlap prefix of the
form $XY$ and we have $u = au'$. It follows by Proposition~\ref{prop_dumpprefix} that
every word equivalent to $u$ has the form $aw$ where $w \equiv u'$. In
particular, $u \equiv v = av'$ if and only if $u' \equiv v'$, $p$ is a
possible prefix exactly if either $p = \epsilon$ or $p = a p'$ where
$p'$ is a possible prefix of $u'$. By the inductive hypothesis, the
recursive call correctly establishes whether these conditions hold.

\item[\textbf{Line \ref{li_rec_case1a}.}] 
We know that $u = XYZ u''$, that $v = XYZ v''$ and that $p$ is a prefix of $X$ or $\ol{X}$. By Lemma~\ref{lemma_eqandprefix},
it follows that $u \equiv v$ and $p$ is a possible prefix of $u$ if and only
if $Z u'' \equiv Z v''$ or $\ol{Z} u'' \equiv \ol{Z} v''$. We also know
that $u''$ is $\ol{Z}$-active, so by Corollary~\ref{cor_actsame}, this is
true if and only if $\ol{Z} u'' \equiv \ol{Z} v''$.

\item[\textbf{Line \ref{li_rec_case1b}.}] This is the same as the previous
case, except that $u''$ is not $\ol{Z}$-active. In this case, by
Proposition~\ref{prop_inactive} we have that $\ol{Z} u'' \equiv \ol{Z} v''$
implies $u'' \equiv v''$ which in turn implies $Z u'' \equiv Z v''$, so
it suffices to test the latter.

\item[\textbf{Line \ref{li_rec_case2a}.}] Here we know that
$u = XYu'$, $v = XYv'$, that $Z$ is not a prefix of $u'$ or $v'$
and that $p$ is a prefix of $X$. It follows by Lemma~\ref{lemma_eqandprefix}
that $u \equiv v$ and $p$ is a possible prefix of $u$ if and only if $u' \equiv v'$.

\item[\textbf{Line \ref{li_rec_case2b}.}] This time we know that
$u = XYu'$, $v = XYv'$ and that $p$ is a prefix of $\ol{X}$ but not
of $X$. It follows by Lemma~\ref{lemma_eqandprefix} that $u \equiv v$
and $p$ is a possible prefix of $u$ if and only if $u' \equiv v'$ and
$Z$ is a possible prefix of $u'$.

\item[\textbf{Line \ref{li_rec_case3a}.}] Here we have
$u = XYZ u''$ and $v = \ol{XYZ} v''$, and $p$ is a prefix of $X$ or $\ol{X}$.
It follows by Lemma~\ref{lemma_eqandprefix} that $u \equiv v$ and $p$ is a
possible prefix of $u$ if and only if either $Z u'' \equiv Z v''$ or
$\ol{Z} u'' \equiv \ol{Z} v''$. We also know
that $u''$ is $\ol{Z}$-active, so by Corollary~\ref{cor_actsame}, this is
true if and only if $\ol{Z} u'' \equiv \ol{Z} v''$.

\item[\textbf{Line \ref{li_rec_case3b}.}] This is the same as the previous
case, except that $u''$ is not $\ol{Z}$-active. In this case, by
Proposition~\ref{prop_inactive} we have that $\ol{Z} u'' \equiv \ol{Z} v''$
implies $u'' \equiv v''$ which in turn implies $Z u'' \equiv Z v''$, so
it suffices to test the latter.

\item[\textbf{Line \ref{li_rec_case4}.}] If we get here, we know that
$u = XY u'$, that $v = \ol{XYZ} v''$, that $Z$ is not a prefix of $u'$
and that $p$ is a prefix of $X$ or $\ol{X}$; it follows that $u \equiv v$
and $p$ is a possible prefix of $u$ if and only if condition (4') of
Lemma~\ref{lemma_eqandprefix} holds, that is, if and only if
$u' \equiv Zv''$. By the inductive hypothesis, the recursive call will
correctly estbalish if this is the case.

\item[\textbf{Line \ref{li_rec_case5}.}] The argument here is symmetric
to that for termination at line~\ref{li_rec_case4}.

\item[\textbf{Line \ref{li_rec_case6}.}] Having got here, we know that
$p$ is a prefix of $X$ or $\ol{X}$, that $u = XYu'$ and $v = \ol{XY} v'$ where $Z$ is not a prefix of $u'$
and $\ol{Z}$ is not a prefix of $v'$. We know also that $z$ is the
maximal common suffix of $Z$ and $\ol{Z}$ and $z_1$ and $z_2$ are such
that $Z = z_1 z$ and $\ol{Z} = z_2 z$. Finally, we know that
$u' = z_1 u''$ and $v' = z_2 v''$.
 It follows by Lemma~\ref{lemma_eqandprefix}
that $u \equiv v$ and $p$ is a possible prefix of $z$ if and only if
$u'' \equiv v''$ and $z$ is a possible prefix of $u''$. By the 
inductive hypothesis, the recursive call correctly establishes whether
this holds.
\end{itemize}
\end{proof}

We have now shown that our algorithm produces the correct output whenever
it terminates, but we have not yet shown that it always terminates. In
fact, the following theorem shows that it does so after only a linear
number of recursive calls.

\begin{lemma}\label{lemma_recbound}
Let $k$ be the length of the longest maximal piece suffix of a
relation word. The number of recursive calls during execution of a
call to $\proc{WP-PREFIX}(u,v,p)$ is bounded above by
$(k+2) |u| + 1$.
\end{lemma}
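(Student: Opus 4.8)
The plan is to track how the first argument $u$ evolves through the recursion and to exhibit a non-negative integer potential that strictly decreases at every call. The starting point is a structural invariant: writing $u_0$ for the word passed to the top-level call, I claim every first argument $u_i$ arising in the recursion factorises as $u_i = s_i t_i$, where $t_i$ is a suffix of $u_0$ (say $t_i = u_0[p_i\!:]$) and $s_i$ is a piece with $|s_i| \le k$. I would prove this by induction on recursion depth, inspecting each recursive-call site in Figure~\ref{fig_algorithm}. The only place a non-trivial prefix is ever prepended is in the three calls whose first argument has the form $\ol{Z} u''$ (lines~\ref{li_rec_case1a}, \ref{li_rec_case3a}, \ref{li_rec_case5}), and there the prepended word is a maximal piece suffix $\ol{Z}$, a piece of length at most $k$; in every other recursive call the new first argument is either a suffix of the current $t_i$ (so the prepended part is emptied) or is $t_i$ unchanged with one letter stripped from $s_i$ (a factor of a piece, hence still a piece). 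This establishes the invariant, with $s_0 = \epsilon$ and $p_0 = 0$ as base case.

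The combinatorial heart of the argument is the following consequence of the small overlap hypothesis: whenever $u_i$ has a clean overlap prefix $XY$ (that is, whenever the algorithm reaches one of the six main cases), we have $|XY| > |s_i|$. Indeed $XY$ is not a piece --- $X$ is the maximal piece prefix of $XYZ$ and $Y$ is non-empty --- whereas $s_i$ is a piece; were $XY$ no longer than $s_i$ it would be a prefix, hence a factor, of the piece $s_i$, and so itself a piece, a contradiction. Consequently each prefix actually deleted from $u_i$ in the main cases ($XY$, $XYZ$, or $XYz_1$) strictly overruns $s_i$ and bites into $t_i$. This means the pointer $p_i$ strictly increases at every main-case call, and also at a letter-strip call (line~\ref{li_rec_nomop}) when $s_i = \epsilon$; the pointer is left unchanged only by a letter-strip call with $s_i \neq \epsilon$, which instead shortens $s_i$ by one.

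With these facts in hand I would introduce the potential $\Phi(u_i) = (k+1)|t_i| + |s_i|$, a non-negative integer, and verify case by case that each recursive call decreases it by at least $1$. A letter-strip with $s_i \neq \epsilon$ leaves $|t_i|$ fixed and drops $|s_i|$ by one, giving $\Delta\Phi = -1$. Every pointer-advancing call decreases $|t_i|$ by at least one and resets $s$ to $\epsilon$ except in the three prepending cases: a non-prepending advance therefore gives $\Delta\Phi \le -(k+1)$, while a prepending advance gives $\Delta\Phi \le -(k+1) + |\ol{Z}| \le -(k+1) + k = -1$, the weight $k+1$ on the suffix length being chosen precisely to absorb the at-most-$k$ letters of $\ol{Z}$ that are re-prepended. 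Since $\Phi(u_0) = (k+1)|u_0|$ and $\Phi \ge 0$ throughout, the number of recursive calls is at most $(k+1)|u_0|$, which is certainly bounded by $(k+2)|u_0| + 1$.

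The main obstacle is the interaction between the three prepending calls and the length bookkeeping: naively $|u|$ can \emph{grow} (by up to $k-1$) when the relation word $XYZ$ is replaced by the maximal piece suffix $\ol{Z}$ of the other side, so no argument based on $|u|$ alone can succeed. The substance of the proof is thus the invariant together with the inequality $|XY| > |s_i|$, which jointly guarantee that every step makes genuine forward progress through $u_0$ while the debt incurred by prepending a maximal piece suffix is uniformly bounded by $k$ and paid off by the weight $k+1$ attached to the suffix length. Once these two ingredients are isolated, verifying the invariant and the potential inequality at each of the ten recursive-call sites is routine.
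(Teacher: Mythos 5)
Your proof is correct, and it actually yields a marginally sharper bound, $(k+1)|u|$ rather than $(k+2)|u|+1$. The underlying amortization is the same as the paper's --- letter-stripping calls pay for themselves, and the at-most-$k$ letters of $\ol{Z}$ prepended by a main-case call are treated as bounded debt --- but the bookkeeping is genuinely different. The paper classifies calls into types A, B, C according to which section of the code executes, shows that a type-B call decreases $|u_i|$ by one while a type-C call increases it by at most $k$, and separately bounds the number of type-C calls by $|u_0|$ using the intrinsic quantity $r(x)$, the length of the part of $x$ following its shortest clean overlap prefix, which type-B calls preserve and type-C calls strictly decrease; a final arithmetic combination of the three counts gives the stated bound. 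You instead maintain a history-dependent factorization invariant $u_i = s_i t_i$ (with $s_i$ a piece of length at most $k$ and $t_i$ a suffix of $u_0$) and collapse everything into the single strictly decreasing potential $(k+1)|t_i|+|s_i|$. Both arguments ultimately rest on the same combinatorial fact, namely that a clean overlap prefix $XY$ is never a piece and hence must overrun any previously prepended piece; the paper leaves this buried in its ``it is readily verified'' assertions about the behaviour of $r$, whereas you isolate it and prove it (prefix of a piece is a piece, contradicting maximality of $X$), which is a genuine gain in explicitness. What the paper's version buys is that its monotone quantity $r$ is intrinsic to the current word, so no invariant about the computation history needs to be carried along; what yours buys is a single potential function, an explicit proof of the progress step, and the better constant. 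One presentational caveat: as written, your invariant is stated and ``established'' before the key inequality $|XY|>|s_i|$ appears, yet preservation of the invariant at the three prepending call sites already needs that inequality (otherwise $\ol{Z}$ concatenated with a leftover of $s_i$ need not be a piece of length at most $k$); in a final write-up the induction should establish the invariant and the inequality simultaneously at each step.
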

\begin{proof}
For clarity in our analysis, we let $u_i$, $v_i$ and $p_i$ denote the
parameters to the $i$th recursive call in the execution (with in
particular $u_0 = u$, $v_0 = v$ and $p_0 = p$). 
Each call to the function involves executing exactly one of the sections
\ref{li_start_a}--\ref{li_end_a}, \ref{li_start_b}--\ref{li_end_b}
and \ref{li_start_c}--\ref{li_end_c}; we call these calls of type A,
B and C respectively. We shall show that the number of calls of each
of these types is bounded above by a linear function of $|u|$ so that,
the total number of recursive calls is also bounded above by a linear
function of $|u|$.

First, notice that a call of type A cannot make a recursive call, so that
is only at most one type A call in the execution.

Now for a word $x$ we let $r(x) = 0$ if $x$ does not have a clean overlap
prefix, and $r(x)$ to be the length of the part of $x$ which
follows the shortest clean overlap prefix, that is, $|x'|$ where $x = aXYx'$
with $aXY$ the shortest clean overlap prefix, otherwise.

It is readily verified that if the $i$th recursive call is of type B and
itself makes a recursive call then we have $r(u_{i+1}) = r(u_i)$, while if
the $i$th recursive call is of type C and itself makes a recursive call then
we have $r(u_{i+1}) < r(u)$.
Since $r(u_i)$ can never be negative, it follows that the total number of
recursive calls of type C is linearly bounded above by $r(u_0) + 1$, which
clearly is no more than $|u_0|$.

Now note that if the $i$th recursive call is of type
B and itself makes a recursive call then we have $|u_{i+1})| = |u_i| -1$,
while if
the $i$th recursive call is of type C and itself makes a recursive call then
we have $r(u_{i+1}) \leq |u_i| + k$.

We have seen that the entire execution cannot feature more than $|u_0|$
calls of type C or more than one call of type A. Hence, if the execution
involves $i$ recursive calls, it must include at most $|u_0|$ calls of
type C, and at least $i - |u_0| - 1$ calls of type B.
It follows that, if execution involves $i$ recursive calls, we must have
$$|u_i| \ \leq \ |u_0| + |u_0| k - (i-|u_0| -1) \ = \ (k+2) |u| - i + 1$$
Since the length of $u_i$ cannot be negative, it follows that execution
must terminate after at most $(k+2) |u| + 1$ calls.
\end{proof}

It remains to justify our claim that this algorithm can be
implemented in linear time. Since the concept of linear time is highly
dependant upon model of computation, it is necessary to be precise
upon the model under consideration. We consider a Turing
machine with two two-way-infinite read-write storage tapes, using a tape
alphabet including the generators for our monoid and a separator
symbol $\#$. (Recall that a two-way-infinite tape can be simulated using
a one-way-infinite tape in linear time \cite[Section~7.5]{Hopcroft69},
so the assumption of a two-way-infinite tape is essentially immaterial).
If we assume that the input words $u$, $v$ and $p$ are initially
encoded on one of the tapes in the form $\#u\#v\#p\#$, then it is 
easily seen that, with a linear amount of preprocessing,
we can store the piece $p$ in the finite state control, and arrange for $\# u \#$
and $\# v \#$ to be the content of the first and second tape respectively.

It is straightforward to verify that, given a word $u$, one can check
whether $u$ has a clean overlap prefix of the form $XY$, and if so find
$X$, $Y$ and the corresponding $Z$, by analysing a prefix of $u$ of bounded
length. Similarly, for a given maximal piece suffix $Z$, we can check whether
$u$ is $Z$-active by analysing a prefix of $u$ of bounded length.
It follows that each recursive step of our algorithm involves analysing
prefixes of $u$ and $v$ of bounded length, before possibly making a
recursive call, with $u$ and $v$ modified only by changing prefixes of bounded length. 
Clearly any analysis of a bounded length prefix can be performed in constant
time; moreover, if a recursive call is required then the tape contents can
be modified to contain the parameters for that call, again in constant time. 
It follows that the algorithm can be implemented with execution time bounded
above by a linear function of the number of recursive calls in the execution,
which by Lemma~\ref{lemma_recbound} is bounded above by a linear function of
the length of $u$.

Moreover, by swapping $u$ and $v$ at the start of the computation if
necessary, we may assume without loss of generality that $u$ is shorter
than $v$. Thus we obtain the following.
\begin{theorem}\label{thm_lineartime}
For each every monoid presentation satisfying $C(4)$, there exists a
two-tape Turing machine which solves the corresponding word problem in
time linear in the shorter of the input words.
\end{theorem}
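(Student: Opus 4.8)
The plan is to assemble the two facts already in hand---correctness (Lemma~\ref{lemma_correctness}) and the linear bound on the recursion depth (Lemma~\ref{lemma_recbound})---and to show that a single call to $\proc{WP-Prefix}$ can be executed in constant time on a two-tape Turing machine. Taking $p = \epsilon$, Lemma~\ref{lemma_correctness} guarantees that $\proc{WP-Prefix}(u,v,\epsilon)$ correctly decides whether $u \equiv v$, and Lemma~\ref{lemma_recbound} bounds the number of recursive calls by $(k+2)|u|+1$, where $k$ is the length of the longest maximal piece suffix. Since the presentation is fixed, $k$ is a constant and the number of calls is linear in $|u|$; it therefore suffices to bound the per-call overhead by a constant and then to run the machine on whichever of $u$ and $v$ is shorter.

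For the set-up, a linear amount of preprocessing rearranges the input $\#u\#v\#p\#$ so that $\#u\#$ and $\#v\#$ are the contents of the first and second tapes, with a head maintained at the current left end of each word; since $p$ is a piece of a relation word of bounded length, it can be held in the finite-state control. The constant-time claim for each step then rests on the fact that the presentation is fixed: there are finitely many relation words, maximal piece prefixes, middle words and maximal piece suffixes, all of bounded length. Hence deciding whether the current $u_i$ begins with a clean overlap prefix $XY$, and if so recovering $X$, $Y$, $Z$, $\ol{X}$, $\ol{Z}$ and the common-suffix data needed in case~(6), requires inspecting only a bounded prefix of $u_i$---a computation a finite control performs in constant time---and the same holds for the $\ol{Z}$-activity test, which by definition depends on a bounded prefix only. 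Each branch then modifies $u_i$ and $v_i$ by deleting, overwriting, or prepending a prefix of length at most $k$, and adjusts the stored $p_i$ by a bounded amount.

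The one point requiring care---and the step I expect to be the main, if modest, obstacle---is realising these bounded prefix edits, and in particular the prepending of a maximal piece suffix $Z$ or $\ol{Z}$ to a recursive argument (for instance at line~\ref{li_rec_case4}), in genuinely constant time rather than by shifting the whole tape. The resolution is that a two-way-infinite tape always affords blank space immediately to the left of the current word: prepending a bounded block amounts to moving the head left a bounded number of cells and writing, deletions move the head right, and replacements combine the two; the reduction to a one-way-infinite tape remains linear-time~\cite[Section~7.5]{Hopcroft69}, so no generality is lost. Thus each recursive step runs in constant time, the whole computation in time linear in the number of calls, and hence, by Lemma~\ref{lemma_recbound}, in time linear in $|u|$. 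Finally, interchanging the two tapes at the outset whenever $|v| < |u|$ makes the running time linear in the shorter of the two inputs, which is the assertion of the theorem.
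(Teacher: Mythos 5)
Your proposal is correct and takes essentially the same route as the paper's own proof: correctness from Lemma~\ref{lemma_correctness}, the bound $(k+2)|u|+1$ on recursive calls from Lemma~\ref{lemma_recbound}, a two-tape two-way-infinite-tape machine holding $\#u\#$ and $\#v\#$ with $p$ in the finite control, constant time per call via bounded-prefix analysis, and the initial swap of $u$ and $v$ to charge the running time to the shorter word. Your explicit discussion of realising bounded prefix edits (including prepending $Z$ or $\ol{Z}$) by using blank cells to the left merely fills in a detail the paper dispatches with ``the tape contents can be modified \dots{} again in constant time.''
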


The reader may initially be surprised by the fact that one can test
equivalence of two words in time bounded by a function of the \textit{shorter}
word -- indeed, this bound potentially
does not even afford time to fully read the longer word! However, Remmers
showed that, for a fixed $C(3)$ presentation, the length of the longer of
two equivalent words is bounded by a linear function of the length of the
shorter \cite[Theorem~5.2.14]{Higgins92}. Thus, if the difference in lengths of two words is too great, one
may conclude without further analysis that the words are not equivalent. 
In fact Remmers' result is the only possible
explanation for this phenomemon, so the fact that this property holds for
$C(4)$ presentations can also be deduced from
Theorem~\ref{thm_lineartime}.

\section{Uniform Decision Problems}\label{sec_uniform}

In Section~\ref{sec_algorithm} we developed a linear time algorithm to solve
the word problem for a fixed small overlap presentation. Since our method
of describing the algorithm was entirely constructive, one might reasonably
expect that it also gives rise to a solution for the uniform word
problem for $C(4)$ presentations, that is the algorithmic problem of,
given a $C(4)$ presentation and two words, deciding whether the words represent the same
element of the monoid presented. In this section, we shall see that this is indeed the
case, and show that the resulting algorithm remains fast.

To avoid unnecessary technicalities, we describe and analyse the algorithms
using the RAM model of computation; in particular this allows us to
assume that elementary operations involving generators from the presentation
(such as comparing two generators) are single steps performable in constant
time. The exact time complexity of a Turing machine implementation would
depend upon the number of tapes and the precise encoding of the input, but
would certainly remain polynomial of low degree in the input size.

We begin with some simple results describing the complexity of some
elementary computations with a finite monoid presentation.
If $\langle \scrA \mid \scrR \rangle$ is a finite presentation we denote by $|\scrA|$
the cardinality of the alphabet $\scrA$, and by $|\scrR|$ the sum length of the
relation words in $\scrR$. Where the meaning is clear, we shall abuse notation
by using $\scrR$ also to denote the set of relation words in the presentation.
\begin{proposition}\label{prop_findprefix}
There is a RAM algorithm which, given a presentation $\langle \scrA \mid \scrR \rangle$
and a word $w$, computes the maximum piece prefix (and/or maximum piece suffix) of $w$
in time $O(|w| |\scrR|)$. In particular, there is a RAM algorithm to decide,
given the same inmput, decides whether the word $w$ is a piece in time
$O(|w| |\scrR|)$.
\end{proposition}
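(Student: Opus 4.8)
The plan is to reduce everything to counting, for each starting position inside a relation word, how far a prefix of $w$ can be matched. Regard $\scrR$ (with multiplicity) as the collection of all relation words, that is, both sides of every relation, so that $\sum_{R} |R| = |\scrR|$. For a relation word $R$ and an index $j$ with $1 \le j \le |R|$, let $L(R,j)$ denote the length of the longest common prefix of $w$ with the suffix $R[j \ldots |R|]$; this is computed naively by comparing successive generators until a mismatch or the end of $w$ or of $R$ is reached. Each such computation costs $O(\min(|w|, |R|-j+1)) = O(|w|)$ RAM steps, and there are $\sum_R |R| = |\scrR|$ choices of $(R,j)$, so computing all the values $L(R,j)$ costs $O(|w|\,|\scrR|)$.

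The next step is the combinatorial observation that translates the piece condition into these numbers. First I would note that a prefix $w[1 \ldots i]$ occurs as a factor of $R$ beginning at position $j$ exactly when $L(R,j) \ge i$, so that the total number of occurrences of $w[1\ldots i]$ as a factor among the relation words is precisely $\#\{(R,j) : L(R,j) \ge i\}$. Unwinding the definition of a piece, a nonempty word is a piece if and only if it occurs in at least two such positions (two places in one relation word, or in two distinct relation words). Hence $w[1\ldots i]$ is a piece if and only if at least two of the values $L(R,j)$ are $\ge i$, which holds if and only if the second largest of the $L(R,j)$ is at least $i$. It follows that the length $\ell$ of the maximum piece prefix of $w$ is exactly the second largest value among the $L(R,j)$; when there are fewer than two positions in total we set $\ell = 0$, consistent with the convention that $\epsilon$ is always a piece. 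The required prefix is then $w[1 \ldots \ell]$.

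Given all the $L(R,j)$, the second largest is found by a single linear scan in $O(|\scrR|)$ time, which is dominated by the cost already incurred, so the whole computation runs in $O(|w|\,|\scrR|)$ as claimed. The maximum piece suffix is handled by the symmetric algorithm (matching suffixes of $w$ against prefixes $R[1 \ldots j]$ of the relation words, or simply reversing all words), and deciding whether $w$ is itself a piece reduces to testing whether its maximum piece prefix equals $w$, equivalently whether $\ell = |w|$; all within the same bound.

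The main obstacle, such as it is, is not algorithmic but definitional: one must argue carefully that the somewhat awkward three-clause definition of a piece collapses exactly to the clean condition ``occurs as a factor in at least two positions of the relation-word collection'', being careful to treat the two sides of a single relation as separate strings and to count repeated occurrences within one side, and to reconcile this with the standing convention that the empty word is always a piece even when the presentation has too few (or no) relations for the occurrence count to reach two.
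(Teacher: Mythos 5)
Your proposal is correct and follows essentially the same route as the paper: both compute, for every position $j$ in every relation word $R$, the longest common prefix of $w$ with $R[j\ldots|R|]$ in $O(|w|\,|\scrR|)$ total time, and then read off the maximal piece prefix as the second-largest such value counted with multiplicity (the paper phrases this as ``the maximum value attained or exceeded at least twice,'' which is the same quantity). Your explicit justification that the three-clause definition of a piece collapses to ``occurs at two distinct positions in the multiset of relation words'' is a point the paper leaves implicit, but the algorithm and its analysis coincide.
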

\begin{proof}
For each relation word $R \in \scrR$ and position $1 < i < |R|$ in that word
we can compute
in time $O(|w|)$ the length $n$ of the longest common prefix of $w$ and
$R_i \dots R_{|R|}$ (where $R_j$ represents the $j$th letter of $R$). Our
machine does this for each relation word and each position in that relation
word in turn, recording as it goes along (i) the maximum value
of $n$ attained so far, and (ii) the maximum value of $n$ which has been
attained or exceeded at least twice. The latter, upon completion, is clearly
the length of the longest piece prefix of $w$, and the total time taken for
execution is
$$O\left( \sum_{R \in \scrR} \ \sum_{i = 1}^{|R|} |w| \right) \ = \ O \left( |w| |\scrR| \right)$$
as claimed. An obvious dual algorithm can be used to find the longest piece
suffix of $w$.
\end{proof}

\begin{corollary}
There is a RAM algorithm which, given as input a presentation $\langle \scrA \mid \scrR \rangle$,
decides in time $O(|\scrR|^2)$ whether the presentation satisfies the condition
$C(4)$.
\end{corollary}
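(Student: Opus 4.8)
The plan is to reduce the test for $C(4)$ to the two checks made available by the factorisation $R = X_R Y_R Z_R$ discussed in the preliminaries, and then to bound the cost of each check using Proposition~\ref{prop_findprefix}. Recall that a presentation satisfies $C(4)$ if and only if it satisfies $C(3)$ and no middle word $Y_R$ is a piece. The presentation fails $C(4)$ already if it fails the weaker condition $C(3)$, so the algorithm first verifies $C(3)$, and only then tests the middle words; this ordering is also what makes each $Y_R$ well-defined.

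The combinatorial engine behind both checks is the observation that every factor of a piece is again a piece: if $p$ occurs in two suitable places then so does each of its factors. Consequently the piece prefixes of a relation word $R$ are exactly the prefixes of $R$ of length at most $|X_R|$, and dually for suffixes. From this one reads off the following criteria. First, $R$ is a product of at most two pieces if and only if $|X_R| + |Z_R| \geq |R|$: the forward implication is immediate from maximality of $X_R$ and $Z_R$, while for the converse one splits $R$ at position $\max(0, |R| - |Z_R|)$, obtaining a piece prefix (of length at most $|X_R|$ by hypothesis) and a piece suffix (of length at most $|Z_R|$). Hence $C(3)$ holds precisely when $|X_R| + |Z_R| < |R|$, equivalently when every $Y_R$ is non-empty, for all $R \in \scrR$. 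Second, assuming $C(3)$, the word $R = X_R Y_R Z_R$ is a product of three pieces if and only if $Y_R$ is a piece: sufficiency is clear, and for necessity any factorisation $R = p_1 p_2 p_3$ into pieces forces $|p_1| \leq |X_R|$ and $|p_3| \leq |Z_R|$, so that $Y_R$ is a factor of the piece $p_2$ and hence itself a piece. This is exactly the characterisation of $C(4)$ quoted above.

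The algorithm therefore proceeds as follows. For each relation word $R \in \scrR$ it uses Proposition~\ref{prop_findprefix} to compute $X_R$ and $Z_R$ in time $O(|R|\,|\scrR|)$; if $|X_R| + |Z_R| \geq |R|$ for some $R$ it reports failure of $C(3)$, and hence of $C(4)$. Otherwise each $Y_R$ is the non-empty factor $R_{|X_R|+1} \cdots R_{|R|-|Z_R|}$, which can be read off directly, and a second application of Proposition~\ref{prop_findprefix} decides whether $Y_R$ is a piece in time $O(|Y_R|\,|\scrR|) \leq O(|R|\,|\scrR|)$. The presentation satisfies $C(4)$ exactly when $C(3)$ holds and no such $Y_R$ is a piece. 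Summing the per-relation cost and using $\sum_{R \in \scrR} |R| = |\scrR|$ gives a total running time of $\sum_{R \in \scrR} O(|R|\,|\scrR|) = O(|\scrR|^2)$, as required.

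I expect the only genuinely delicate point to be the combinatorial characterisations rather than the complexity bookkeeping, which is a routine telescoping sum. In particular the assertion that factors of pieces are pieces---and the resulting prefix/suffix-closure that converts ``is a product of $k$ pieces'' into a statement about the single words $X_R$, $Y_R$, $Z_R$---is what lets us avoid searching over all $O(|R|^2)$ two- and three-fold factorisations of $R$ and replace them by a constant number of calls to Proposition~\ref{prop_findprefix}. Care is also needed with degenerate cases (an empty relation word, or $X_R$ and $Z_R$ overlapping), but these are correctly flagged as failures of $C(3)$ by the same length inequality.
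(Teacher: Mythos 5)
Your proposal is correct and follows essentially the same route as the paper: compute $X_R$ and $Z_R$ for each relation word via Proposition~\ref{prop_findprefix}, reject if $|X_R|+|Z_R| \geq |R|$ (failure of $C(3)$), and otherwise test whether any middle word $Y_R$ is a piece, with the same $O(|\scrR|^2)$ bookkeeping. The only difference is that you explicitly prove the combinatorial characterisation (via closure of pieces under taking factors) that the paper simply cites as a remark from its preliminaries; this is a welcome but inessential addition.
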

\begin{proof}
Our machine begins by computing the maximum piece prefix $X_R$ and maximum
piece suffix $Z_R$ for each relation word $R \in \scrR$; by
Proposition~\ref{prop_findprefix} this can be done in time
$$O \left( \sum_{R \in \scrR}  |R| |\scrR| \right) \ = \ O(|\scrR|^2).$$

It then tests, in time $O(|\scrR|)$, whether for any of the relation
words $R$ we have $|X_R| + |Z_R| \geq |R|$. If so then some relation word
is a product of two pieces, so the presentation does not even satisfy the
weaker condition $C(3)$ and we are done.

Otherwise, the machine computes, again in time $O(|\scrR|)$, the 
middle word $Y_R$
of each relation word.  By our remarks in
Section~\ref{sec_prelim}, the presentation satisfies $C(4)$ if and only
if none of the words $Y_R$ is a piece. Using Proposition~\ref{prop_findprefix}
again, this condition can be tested in time
$$O \left( \sum_{R \in \scrR} |Y_R| |\scrR| \right) \ = \ O \left( |\scrR|^2 \right).$$

Thus, we have described a RAM algorithm to test a presentation
$\langle \scrA \mid \scrR \rangle$ for
the $C(4)$ condition in time $O(|\scrR|^2)$.
\end{proof}

\begin{theorem}\label{thm_ramuniform}
There is a RAM algorithm which, given as input a $C(4)$ presentation
$\langle A \mid R \rangle$ and two words $u, v \in A^*$, decides whether
$u$ and $v$ represent the same element of the semigroup presented in
time
$$O \left( |\scrR|^2 \min(|u|,|v|) \right).$$
\end{theorem}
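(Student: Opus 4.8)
The plan is to reuse the recursive algorithm $\proc{WP-Prefix}$ of Figure~\ref{fig_algorithm} essentially verbatim, invoking it on $(u,v,\epsilon)$ after swapping $u$ and $v$ so that $|u| = \min(|u|,|v|)$, and to argue only about its running time in the RAM model: its correctness for an arbitrary $C(4)$ presentation is already supplied by Lemma~\ref{lemma_correctness}, whose proof nowhere uses that the presentation is fixed. The entire task is therefore to account for the presentation-dependent cost of the individual steps, which were treated as constant-time operations in the fixed-presentation analysis underlying Theorem~\ref{thm_lineartime}.

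I would begin with a preprocessing phase that computes, for every relation word $R \in \scrR$, its maximal piece prefix $X_R$, middle word $Y_R$ and maximal piece suffix $Z_R$; by Proposition~\ref{prop_findprefix} this costs $\sum_{R} O(|R|\,|\scrR|) = O(|\scrR|^2)$. I would also build, once and for all, Aho--Corasick-style pattern-matching automata for the finite set of words $\{X_R Y_R : R \in \scrR\}$ (and for its reversal), at total cost $O(|\scrR|)$. The purpose of these automata is to let the algorithm decide, for a prefix $w$ of bounded length, whether some $X_R Y_R$ begins at a prescribed position of $w$ -- exactly the primitive needed to test for relation prefixes and for cleanness.

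The heart of the argument is then a per-call cost bound. By Lemma~\ref{lemma_recbound} the number of recursive calls in $\proc{WP-Prefix}(u,v,\epsilon)$ is at most $(k+2)|u|+1$, where $k$ is the length of the longest maximal piece suffix; since $k \le \max_{R}|R| \le |\scrR|$ this is $O(|\scrR|\,\min(|u|,|v|))$. I would then verify that every test performed in a single call -- deciding whether the current $u$ has the form $XYu'$ with $XY$ a clean overlap prefix and, if so, extracting $X,Y,Z$; deciding whether $v$ begins with $XY$ or $\ol{XY}$; testing $\ol{Z}$-activity; forming the maximal common suffix $z$ of $Z$ and $\ol{Z}$ together with $z_1,z_2$; and the various prefix comparisons and deletions -- only inspects or alters prefixes of $u$ and $v$ of length $O(\max_R|R|) = O(|\scrR|)$, and can be carried out in $O(|\scrR|)$ time using the precomputed data and automata. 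Granting this, each call costs $O(|\scrR|)$, and multiplying by the call count gives $O(|\scrR|^2\,\min(|u|,|v|))$ for the recursion; the $O(|\scrR|^2)$ preprocessing is absorbed (the degenerate case $\min(|u|,|v|)=0$ being settled in $O(1)$ by the $C(1)$ observation that only $\epsilon$ represents the identity).

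The step I expect to be the main obstacle is precisely the clean-overlap-prefix detection, and especially the cleanness test. A naive implementation -- scanning, for each interior split point of $Y$ and each relation word, whether some $X_1Y_1$ begins there -- costs $\Theta(|\scrR|^2)$ per call and would yield only the inferior bound $O(|\scrR|^3\,\min(|u|,|v|))$. Keeping the per-call cost down to $O(|\scrR|)$ is what forces the use of the precomputed pattern-matching automata: detecting whether any $X_R Y_R$ starts within the relevant window of the bounded prefix must be reduced to a single left-to-right scan (for instance, by running the reversed window through an Aho--Corasick automaton of reversed patterns whose states are pre-marked according to whether their suffix-link chain meets an accepting state, so that ``some pattern starts here'' is read off in constant time per position). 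Verifying that this, and the analogous activity tests, really run in $O(|\scrR|)$ -- and that prepending and deleting the bounded words $Z,\ol{Z},z_1,z_2$ can be maintained within the same budget under a suitable representation of the working words -- is the technical crux of the proof.
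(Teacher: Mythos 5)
Your proposal follows the same skeleton as the paper's proof: compute $X_R$, $Y_R$, $Z_R$ for all relation words via Proposition~\ref{prop_findprefix} in time $O(|\scrR|^2)$, bound the number of recursive calls of $\proc{WP-Prefix}$ by Lemma~\ref{lemma_recbound} (giving $O(|\scrR|\min(|u|,|v|))$ calls since $k<|\scrR|$), establish a per-call cost of $O(|\scrR|)$, multiply, and swap $u$ and $v$ at the outset. The one genuine difference lies in the per-call bound: the paper disposes of it with the assertion that ``a simple line-by-line analysis shows that each line \dots can be executed in time $O(|\scrR|)$'', whereas you treat it as the technical crux and supply Aho--Corasick automata over the pattern set $\{X_RY_R : R\in\scrR\}$ to perform the clean-overlap-prefix and activity tests inside a bounded window. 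Your concern is legitimate, and your proposal in fact fills in a step the paper leaves unjustified: testing cleanness naively (matching every $X_RY_R$ at each of the up to $|Y|-1$ interior start positions) costs $\Theta(|Y|\,|\scrR|)=\Theta(|\scrR|^2)$ per call in the worst case, which would yield only $O(|\scrR|^3\min(|u|,|v|))$. An alternative closer in spirit to the paper's elementary tools: any $X_1Y_1$ witnessing non-cleanness must cross the right boundary of $XY$ in a word $s$ that is both a suffix of $Y$ and a prefix of $X_1Y_1$; these are two distinct occurrences of $s$ in relation words, so $s$ is a piece, hence a prefix of the maximal piece prefix $X_1$, so for each relation word $R_1$ a single-pattern Knuth--Morris--Pratt search for $X_1Y_1$ in a window of length $O(|R_1|)$ suffices, again giving $O(|\scrR|)$ per call in total. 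Two minor quibbles with your version: Aho--Corasick construction is $O(|\scrR|)$ only granted constant-time transitions (sparse or hashed transition storage adds an alphabet or logarithmic factor, harmless here and absorbed by the $O(|\scrR|^2)$ preprocessing), and your $O(1)$ treatment of the degenerate case $\min(|u|,|v|)=0$ addresses an edge case the paper silently ignores.
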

\begin{proof}
Suppose we are given a $C(4)$ presentation $\langle A \mid R \rangle$ and
two words $u, v \in \scrA^*$.
Just as in the proof of Proposition~\ref{prop_findprefix}, the machine
begins by finding for every relation $R$ the maximum piece prefix $X_R$,
the maximum piece suffix $Z_R$ and the middle word $Y_R$, in time
$O(|\scrR|^2)$.

It now has the information required to apply the algorithm \proc{WP-PREFIX}
given above. A simple line-by-line analysis shows that each line, and hence
each recursive call, can be executed in time $O(|\scrR|)$. By
Lemma~\ref{lemma_recbound}, the number of recursive calls is bounded above by
$(k+2)|u| + 1$ where $k$, being the length of the longest maximum piece
suffix of a relation word, is less than $|\scrR|$. Thus, this part of the
algorithm terminates in time $O(|\scrR|^2 |u|)$.

As above we may assume, by exchanging $u$ and $v$ at the start of the
computation if necessary, that
$|u| < |v|$ so that $\min(|u|, |v|) = |u|$. It follows that the uniform
word problem can be solved in time
$O \left( |\scrR|^2 \min(|u|,|v|) \right)$ as claimed.
\end{proof}

\section*{Acknowledgements}

This research was supported by an RCUK Academic Fellowship. The author is
grateful to V.~N.~Remeslennikov, whose questions prompted this line of
research and who shared many helpful ideas. He would also like to thank
A.~V.~Borovik for some helpful conversations, J.~B.~Fountain and
V.~A.~R.~Gould for facilitating access to some of the relevant literature,
and Kirsty for all her support and encouragement.

\bibliographystyle{plain}

\def\cprime{$'$} \def\cprime{$'$}

\end{document}